\newtheorem{thm}{Theorem}
\newtheorem{lem}[thm]{Lemma}
\theoremstyle{definition}
\newtheorem{defi}[thm]{Definition}
\newtheorem{rmk}[thm]{Remark}
\newcommand{\RR}{\mathbb{R}}
\newcommand{\NN}{\mathbb{N}}
\newcommand{\sph}{\mathbb{S}}
\newcommand{\sd}{\Gamma}
\newcommand{\xx}{x}
\newcommand{\yy}{y}
\newcommand{\g}{h}
\DeclareMathOperator{\dS}{\mathrm{d}S}
\DeclareMathOperator{\ds}{\mathrm{d}s}
\DeclareMathOperator{\dyy}{\mathrm{d}y}
\DeclareMathOperator{\dalpha}{\mathrm{d}\alpha}
\DeclareMathOperator{\dbeta}{\mathrm{d}\beta}
\newcommand{\nn}{n}
\newcommand{\mm}{m}
\newcommand{\ddim}{{n+m}}
\newcommand{\sign}{\operatorname{sign}}
\newcommand{\kl}[1]{\left(#1\right)}
\newcommand{\abs}[1]{\left\lvert#1\right\rvert}
\newcommand{\sabs}[1]{\lvert#1\rvert}
\newcommand{\Ho}{\mathbf  H}
\newcommand{\Bo}{\mathbf  B}
\newcommand{\Co}{\mathbf  C}
\newcommand{\Mo}{\mathbf M}
\newcommand{\D}{\mathbf D}
\newcommand{\eor}{\hfill {\scriptsize\OrnamentDiamondSolid}}
\newcommand{\rt}{{\tt r}}
\newcommand{\ksig}{{\tt k}}
\newcommand{\kr}{{\tt l}}
\newcommand{\kz}{{\tt m}}
\newcommand{\gt}{{\tt g}}
\newcommand{\Ksig}{{\tt K}}
\newcommand{\Kz}{{\tt L}}
\newcommand{\Kr}{{\tt M}}
\newcommand{\ft}{{\tt f}}
\newcommand{\nt}{{\tt n}}
\newcommand{\Nt}{{\tt N}}
\newcommand{\Mhorts}{{\tt M_\xx^\sharp}}
\newcommand{\Mvertts}{{\tt M_{\yy,s}^\sharp}}
\newcommand{\set}[1]{\{#1\}}
\numberwithin{thm}{section}
\numberwithin{figure}{section}
\begin{document}

\begin{frontmatter}

\title{The spherical  Radon transform with centers on cylindrical  surfaces}
\author{Markus Haltmeier}
\ead{markus.haltmeier@uibk.ac.at}
\address{Department of Mathematics,
University of Innsbruck,
Technikestra{\ss}e 13, A-6020 Innsbruck, Austria}

\author{Sunghwan Moon\corref{cor1}}
\ead{shmoon@unist.ac.kr}
\address{Department of Mathematical Sciences,
Ulsan National Institute of Science and Technology,
Ulsan 44919, Republic of Korea}
\cortext[cor1]{Corresponding author}

\begin{abstract}
Recovering a function from its  spherical Radon transform with centers of spheres of integration restricted to a  hypersurface is at the heart of several modern imaging technologies, including  SAR, ultrasound imaging, and  photo- and thermoacoustic tomography.
In this paper we study an inversion of the spherical Radon transform with centers of integration restricted to
cylindrical  surfaces of the form $\sd \times  \RR^\mm$, where $\sd$ is a hypersurface  in $\RR^\nn$.
We show that this transform can be decomposed into two lower dimensional spherical Radon transforms, one with centers on  $\sd$ and one with a planar center-set in $\RR^{\mm+1}$. Together with explicit inversion formulas for the spherical Radon transform with a planar center-set and existing algorithms for  inverting the spherical Radon transform with a center-set $\sd$, this yields  reconstruction procedures for general  cylindrical  domains.
In the special case of  spherical or elliptical cylinders we obtain novel explicit inversion formulas.
For three spatial dimensions, these  inversion formulas can be implemented efficiently by backprojection type algorithms only requiring $\mathcal O(\Nt^{4/3})$ floating point operations, where $\Nt$ is the total number of unknowns to be recovered. We present numerical results demonstrating  the efficiency of the derived algorithms.
\end{abstract}

\begin{keyword} 
Spherical means\sep
Radon transform\sep
inversion\sep
reconstruction formula

\MSC 44A12 \sep
45Q05\sep
35L05\sep
92C55

\end{keyword}

\end{frontmatter}

\section{Introduction}
\label{sec:intro}

Let $\sd$ be  a hypersurface in $\RR^\nn$. In this paper we study the spherical  Radon transform with a center-set $\sd \times \RR^\mm$ that
maps a function $f  \colon \RR^\ddim \to \RR$  to the spherical integrals
\begin{equation*}
	(\Mo_{\xx,\yy} f)(\xx,\yy, r)
	\coloneqq\frac1{\sabs{\sph^{\ddim-1}}}
	\int_{\sph^{\ddim-1}}
	f((\xx,\yy) + r \omega)
	\dS(\omega)
	\quad\text{ for }
	(\xx, \yy ,r) \in \sd \times \RR^{\mm+1} \,.
\end{equation*}
Here $\sabs{r}$ and $(\xx,\yy) \in \sd \times \RR^\mm$ are the radius and the center of the sphere of integration, respectively, $\sph^{\ddim-1}$ is the unit sphere in $\RR^\ddim$, and $\sabs{\sph^{\ddim-1}}$ is the total surface area of $\sph^{\ddim-1}$.
Note that  $x,y$ as subscripts of $(\Mo_{\xx,\yy} f) (\xx, \yy ,r)$  indicate  the variables in which  the spherical Radon transform is applied, while in the argument  they are placeholders for the actual data points. For example, evaluating  the spherical Radon transform  at $ (\xx,\yy,r)=(0,0,0)$ gives $(\Mo_{\xx,\yy} f )(0,0,0)$. Similar notions will be used for  auxiliary transforms  introduced below; these  suggestive  notations  are used to facilitate   the readability of  the manuscript.

Recovering a function from its  spherical Radon transform with centers  restricted to a hypersurface
is crucial for the recently developed thermoacoustic and photoacoustic tomography \cite{krugerlfa95,wang09}.
It is also relevant for other imaging  technologies such
SAR imaging  \cite{hellstena87,SteUhl13}
or ultrasound tomography  \cite{nortonl81}.

Explicit inversion formulas for reconstructing $f$ from its spherical Radon transform are of theoretical as well of
practical importance.
For example, they serve as theoretical basis of backprojection-type reconstruction algorithms frequently used in practice.
However, explicit inversion  formulas are only known for some special center-sets. Such formulas exist for the case where the center-set is a hyperplane  \cite{andersson88,Bel09,BuhKar78,fawcett85,klein03,narayananr10}
or a sphere \cite{finchhr07,finchpr04,kunyansky07,Ngu09,xuw05}.
More recently, closed-form inversions have also been derived for the cases of elliptically shaped center-sets (see \cite{AnsFilMadSey13,Hal14a,Hal14b,Nat12,Pal12,Sal14}), certain quadrics \cite{HalPer15,HalPer15b},
oscillatory algebraic sets \cite{Pal14}, and corner-like domains \cite{Kun15}.

\begin{figure}[tbh!]
\begin{center}
 \includegraphics[width=0.48\textwidth]{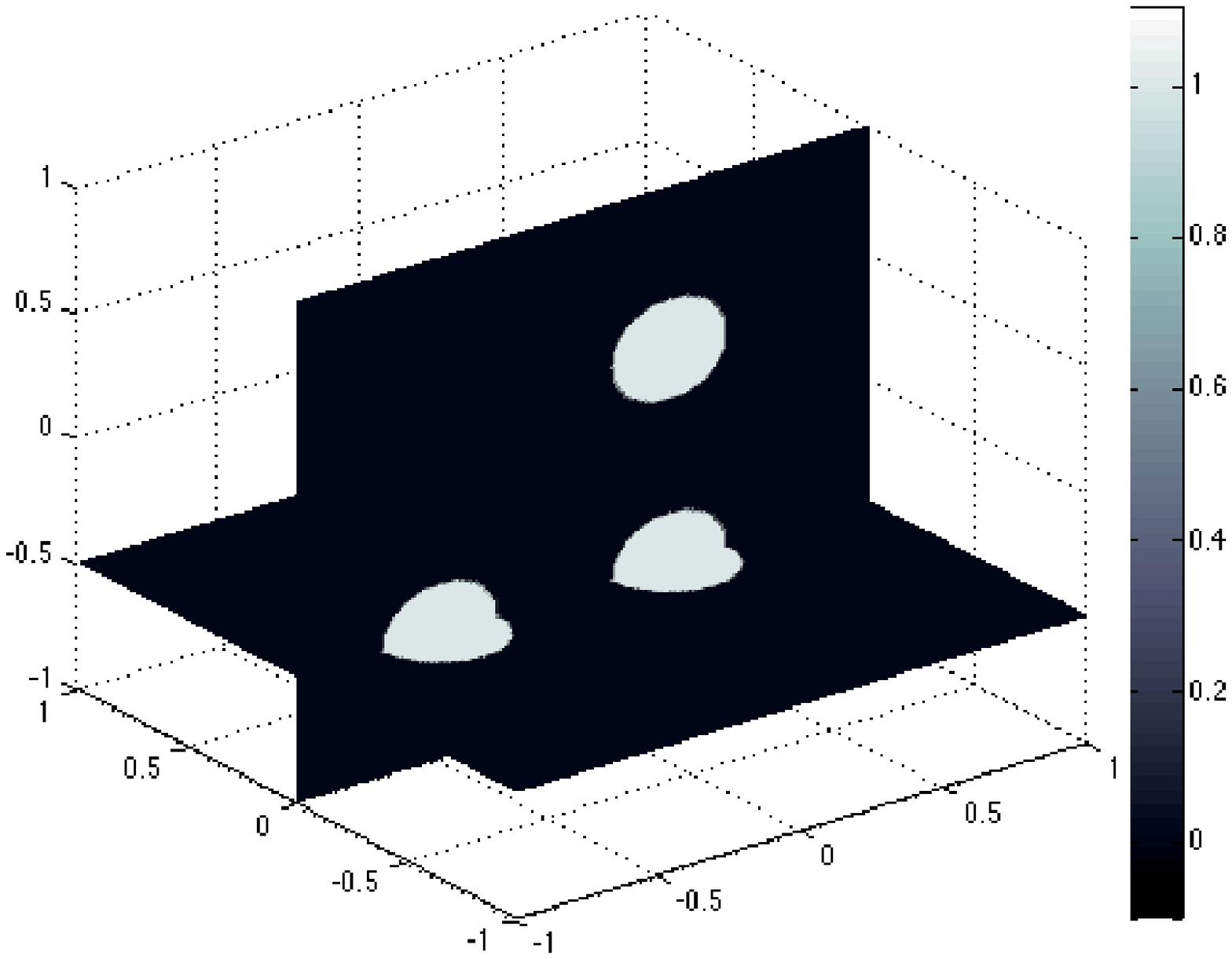}
 \includegraphics[width=0.48\textwidth]{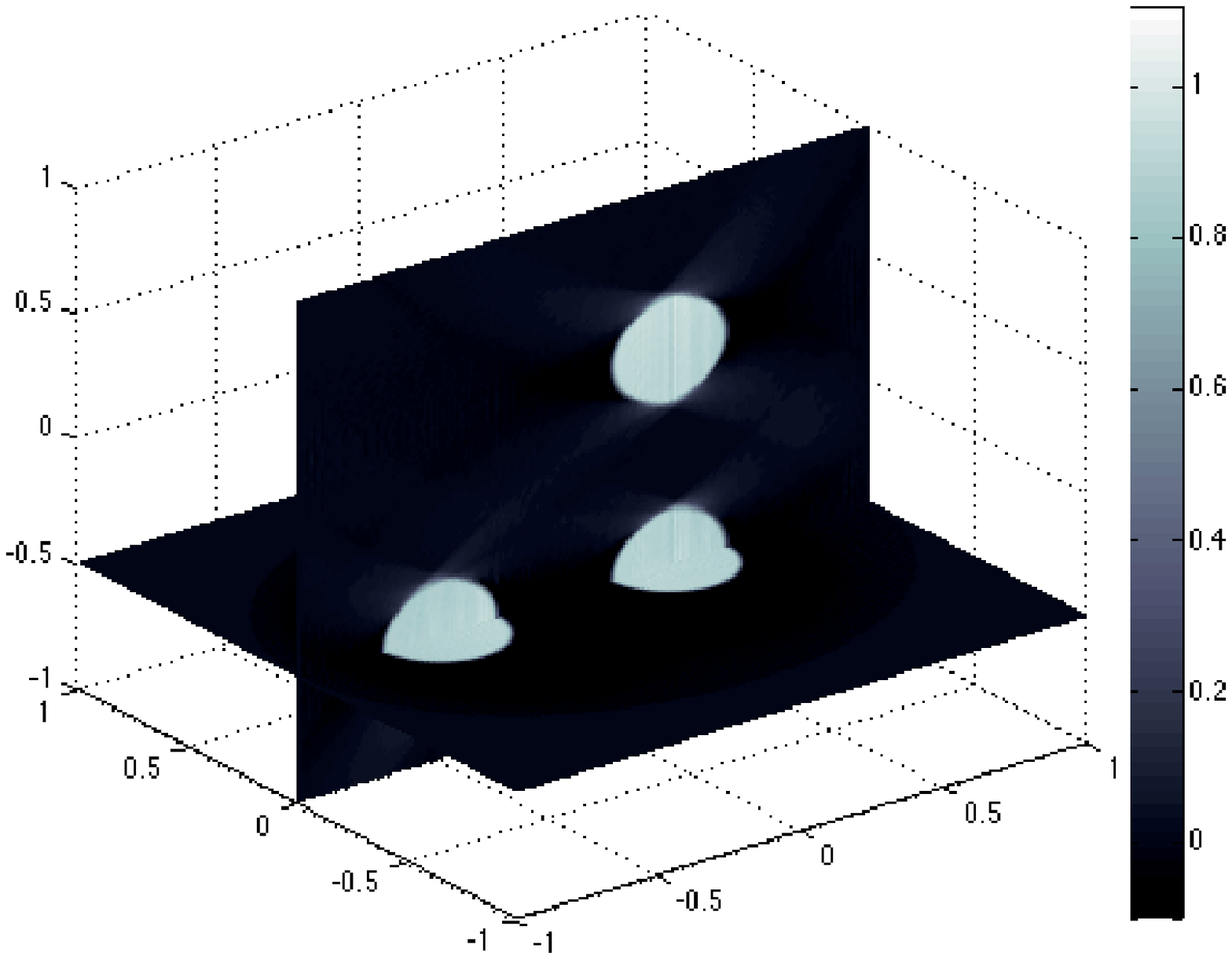}
\end{center}
 \caption{Left: A phantom consisting of a superposition of three
 balls.  Right: 3D reconstruction from its spherical Radon transform on a cylinder in $\RR^3$ using the inversion formula~\eqref{eq:inv-ell-cyl} derived in Section \ref{sec:inv}.}
 \label{fig:phantom}
\end{figure}

\subsection{Main contribution}

In this paper we  present a general approach for deriving reconstruction algorithms and inversion formulas for the spherical Radon  transform on cylindrical surfaces yielding inversion formulas for the  center-set $\Gamma \times \RR^\mm$, provided that an inversion formula is known for  the center-set $\Gamma$.     
Our  approach is based on the  observation that the spherical Radon transform with a center-set $\sd \times \RR^\mm$ can be written as the composition of  a  spherical Radon transform with a center-set $\sd \subseteq \RR^{\nn}$  and another  spherical Radon transform with a planar center-set in $\RR^{\mm+1}$ (see Theorem~\ref{thm:factorization}).
Recall  that inversion formulas for the spherical Radon transform with   a planar center-set  are  well known.
Consequently, if an inversion formula is available for the center-set $\sd \subseteq \RR^{\nn}$, then this factorization yields an inversion formula for $\Mo_{\xx,\yy}$. As explicit inversion formulas are in particular known for spherical and  elliptical center-sets, we obtain new analytic inversion formulas for spherical or elliptical cylinders (Theorems \ref{thm:inv-ell} and \ref{thm:inv-disc}).
A reconstruction result with one of our inversion formulas for 
an elliptical cylinder is shown in Figure~\ref{fig:phantom}.

We emphasize, that our factorization approach can also be combined with any other reconstruction algorithm for the spherical means, such as time reversal \cite{burgholzermhp07,HriKucNgu08,Treeby10} or
series  expansion methods \cite{AgrKuc07,kunyansky071},
that can be used for the case where $\sd$ is the boundary of a general bounded domain.

\subsection{Outline}

The rest of this paper is organized as follows. In Section \ref{sec:factorization} we show that the  spherical Radon transform with centers  on  $\sd \times \RR^\mm$ can be decomposed in  two partial  spherical Radon transforms, one with a center-set  $\sd$ and one with a planar center-set in $\RR^{\mm+1}$; see Theorem~\ref{thm:factorization}.
In Section~\ref{sec:inv} we apply the factorization approach to derive novel explicit inversion formulas for the case where $\sd \times \RR^\mm$  is an elliptical or circular cylinder; see Theorems~\ref{thm:inv-ell} and~\ref{thm:inv-disc}.
In Section~\ref{sec:num} we derive filtered backprojection algorithms based on the inversion formulas   for elliptical cylinders in $\RR^3$ and present numerical results. The paper concludes with a short discussion in Section~\ref{sec:discussion}.

\section{Decomposition of  the spherical Radon transform}
\label{sec:factorization}

We study the spherical Radon transform with centers on a cylinder $\sd \times \RR^\mm$, where $\sd \subseteq \RR^{\nn}$ is any smooth hypersurface with $\nn, \mm \in \NN$ and $\nn \geq  2$.
It will be convenient to identify $\RR^{\ddim}  \simeq \RR^{\nn} \times \RR^\mm$ and to
write  any point in $\RR^{\ddim}$ in the form $(\xx, \yy) \in \RR^{\nn} \times  \RR^\mm$.

\begin{defi}[Spherical Radon transform with a cylindrical center-set]
Let  $f \in L^1_{\rm loc}(\RR^\ddim)$.
The  spherical Radon transform  of $f$  with a cylindrical center-set $\sd \times \RR^\mm$ is defined by
\begin{align*}
\Mo_{\xx,\yy} f \colon \sd \times \RR^{\mm+1}
&\to \RR
\\
 (\xx,\yy,r)
&\mapsto \frac{1}{\sabs{\sph^{\ddim-1}}}\int_{\sph^{\ddim-1}}
f \kl{(\xx,\yy) + r \omega}\dS(\omega) \,.
\end{align*}
\end{defi}

In the following we show that $\Mo_{\xx,\yy}$  can be written as the product of
two lower dimensional spherical  Radon transforms, one with a center-set $\sd$
in $\RR^\nn$ and one with a planar center-set  in $\RR^{\mm+1}$. These partial  spherical  Radon transforms
 are defined as follows.

\begin{defi}[Partial spherical Radon transforms]\mbox{}
\begin{enumerate}[label=(\alph*)]
\item
For  $f \in L^1_{\rm loc}(\RR^\ddim)$  we define
\begin{equation*}
\Mo_{\xx} f \colon \sd \times \RR^{\mm+1}
\to \RR \colon
 (\xx,\yy,s) \mapsto \frac{1}{\sabs{\sph^{\nn-1}}}\int_{\sph^{\nn-1}}f(\xx+ s\omega_1,\yy)\dS(\omega_1) \,.
\end{equation*}
\item
For  $\g \in L^1_{\rm loc}(\sd \times \RR^{\mm+1})$  we define
\begin{equation*}
\Mo_{\yy,s} \g \colon \sd \times \RR^{\mm+1}
\to \RR \colon
 (\xx,\yy,r) \mapsto \frac{1}{\sabs{\sph^\mm}}
 \int_{\sph^\mm}\g(\xx,(\yy,0) + r\sigma)\dS(\sigma)\,.
\end{equation*}
\end{enumerate}
\end{defi}

The operators $\Mo_{\xx}$  and $\Mo_{\yy,s}$ are partial spherical Radon transforms,  where the subscripts indicate the  variables in which the integration is applied. By definition,  $\Mo_{\xx,\yy} f$, $\Mo_{\xx} f$,   and $\Mo_{\yy,s} \g$ are even in the last variable.

\begin{thm}[Decomposition] \label{thm:factorization}
For every $f \in L^1_{\rm loc}(\RR^\ddim)$  we have
\begin{equation}\label{eq:factorization}
\Mo_{\xx,\yy} f
=
C_{n,m}
\,
\sabs{r}^{1-\nn}
\Mo_{\yy,s} \kl{ \sabs{s}^{\nn-1} \Mo_\xx f  }
\quad{ with } \quad C_{n,m}  \coloneqq \frac{1}{2} \, \frac{\sabs{\sph^\mm} \, \sabs{\sph^{\nn-1}}}{\sabs{\sph^{\ddim-1}}} \,.
\end{equation}
\end{thm}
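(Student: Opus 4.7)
The plan is to expand both sides using the three definitions of partial and full spherical Radon transforms and to reduce the identity to a single change of variables on $\sph^{\ddim-1}$. Unfolding the right-hand side first: writing any $\sigma \in \sph^\mm \subset \RR^\mm\times\RR$ in the form $\sigma = (\sigma', \sigma_{\mm+1})$, the vector $(\yy,0) + r\sigma$ has last coordinate $r\sigma_{\mm+1}$, which plays the role of the radial argument of $\Mo_\xx f$. Substituting the definition of $\Mo_\xx f$ into $\Mo_{\yy,s}(\sabs{s}^{\nn-1}\Mo_\xx f)$ and multiplying by $C_{\nn,\mm}\sabs{r}^{1-\nn}$ causes the factors of $\sabs{r}^{\nn-1}$ to cancel, so the claimed identity is equivalent to
\begin{equation*}
\frac{1}{\sabs{\sph^{\ddim-1}}}\int_{\sph^{\ddim-1}} f((\xx,\yy)+r\omega)\dS(\omega)
= \frac{C_{\nn,\mm}}{\sabs{\sph^\mm}\sabs{\sph^{\nn-1}}}\int_{\sph^\mm}\!\int_{\sph^{\nn-1}} \sabs{\sigma_{\mm+1}}^{\nn-1} f(\xx + r\sigma_{\mm+1}\omega_1, \yy + r\sigma')\dS(\omega_1)\dS(\sigma).
\end{equation*}

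The main step is to establish this via a polar-type parameterization of $\sph^{\ddim-1}\subset\RR^\nn\times\RR^\mm$. Consider the map $\Phi\colon\sph^{\nn-1}\times\sph^\mm\to\sph^{\ddim-1}$ defined by $\Phi(\omega_1,\sigma) = (\sigma_{\mm+1}\omega_1, \sigma')$. It covers $\sph^{\ddim-1}$ twice, since for $\omega = (\omega_x,\omega_y)$ with $\omega_x\neq 0$ the two preimages correspond to the sign choices $\sigma_{\mm+1} = \pm\sabs{\omega_x}$. Parameterizing further $\sigma = (\sin\beta\,\tilde\sigma,\cos\beta)$ with $\beta\in[0,\pi]$ and $\tilde\sigma\in\sph^{\mm-1}$, we have $\dS(\sigma)=\sin^{\mm-1}\beta\,\dbeta\,\dS(\tilde\sigma)$ and $\sabs{\sigma_{\mm+1}}=\sabs{\cos\beta}$, while $\Phi(\omega_1,\sigma) = (\cos\beta\,\omega_1, \sin\beta\,\tilde\sigma)$ is the classical bispherical parameterization of $\sph^{\ddim-1}$, whose Jacobian is the well-known $\sabs{\cos\beta}^{\nn-1}\sin^{\mm-1}\beta$. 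Combining the pullback formula with the $2$-to-$1$ covering yields
\begin{equation*}
\int_{\sph^{\ddim-1}} F(\omega)\dS(\omega) = \frac{1}{2}\int_{\sph^\mm}\!\int_{\sph^{\nn-1}} \sabs{\sigma_{\mm+1}}^{\nn-1} F(\sigma_{\mm+1}\omega_1, \sigma')\dS(\omega_1)\dS(\sigma)
\end{equation*}
for every continuous $F$ on $\sph^{\ddim-1}$; applied to $F(\omega)=f((\xx,\yy)+r\omega)$ this is exactly the reduced identity, once constants are matched.

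The matching of constants is then forced by $C_{\nn,\mm} = \tfrac{1}{2}\sabs{\sph^\mm}\sabs{\sph^{\nn-1}}/\sabs{\sph^{\ddim-1}}$: the factor $\tfrac{1}{2}$ accounts for the $2$-to-$1$ nature of $\Phi$, while the ratio of surface areas absorbs the three different normalizing constants built into the definitions of $\Mo_{\xx,\yy}$, $\Mo_\xx$, and $\Mo_{\yy,s}$. The only genuinely nontrivial ingredient is thus the correct bookkeeping of the polar decomposition of $\sph^{\ddim-1}$; the weight $\sabs{s}^{\nn-1}$ appearing in the statement is tailored precisely to cancel the Jacobian factor $\sabs{\cos\beta}^{\nn-1}$, which is exactly why the factorization produces a clean product of two lower-dimensional spherical Radon transforms rather than an operator with a more complicated kernel.
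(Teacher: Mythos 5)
Your proposal is correct and follows essentially the same route as the paper: both arguments come down to the bispherical decomposition of $\sph^{\ddim-1}$ relative to the splitting $\RR^{\nn}\times\RR^{\mm}$, with the Jacobian weight $\sabs{\cos\beta}^{\nn-1}\sin^{\mm-1}\beta$ cancelling against $\sabs{s}^{\nn-1}$. The only cosmetic difference is how the factor $\tfrac12$ is booked: you obtain it from the $2$-to-$1$ covering $(\omega_1,\sigma)\mapsto(\sigma_{\mm+1}\omega_1,\sigma')$, whereas the paper parameterizes only the half-sphere $\sph^{\mm}_+$ and invokes the evenness of $\sabs{s}^{\nn-1}\Mo_\xx f$ in $s$ to pass to all of $\sph^{\mm}$ — two equivalent ways of saying the same thing.
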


\begin{proof}
We use  standard spherical coordinates  $\Phi \colon  (0,2\pi) \times (0,\pi)^{\nn-2} \times (0,\pi)^{\mm} \to  \sph^{\ddim-1}$
written in the form $\Phi(\alpha, \beta )  = (\Phi_1(\alpha) \sin(\beta_1) \dots\sin(\beta_\mm), \Phi_2(\beta))$ for  $\alpha \in (0,2\pi) \times (0,\pi)^{\nn-2}$ and   $\beta \in (0,\pi)^\mm$,  
where
\begin{equation*}
\Phi_1(\alpha)
\coloneqq \left[\begin{array}{r}
\sin(\alpha_1)\sin(\alpha_2)\cdots\sin(\alpha_{\nn-1})\\
\cos(\alpha_1)\sin(\alpha_2)\cdots\sin(\alpha_{\nn-1})\\
\vdots\\
\cos(\alpha_{\nn-2})\sin(\alpha_{\nn-1})\\
\cos(\alpha_{\nn-1})\end{array}\right] 
\quad\text{and} \quad
\Phi_2(\beta)
\coloneqq
\left[\begin{array}{r}
\cos(\beta_1)\sin(\beta_2)\sin(\beta_3) \cdots\sin(\beta_\mm)\\
\cos(\beta_2)\sin (\beta_3) \cdots\sin(\beta_\mm)\\
\vdots\\
\cos(\beta_{\mm-1})\sin(\beta_\mm)\\
\cos(\beta_\mm)
\end{array}\right]  \,.
\end{equation*}
Notice that $\Phi_1  \colon  (0,2\pi) \times (0,\pi)^{\nn-2}  \to  \sph^{\nn-1}$ is the  standard  parameterization
of $\sph^{\nn-1}$ using spherical coordinates.  Further, not that $ \dS_{\nn-1}(\alpha) = \sin(\alpha_2) \dots \sin(\alpha_{\nn-1})^{\nn-2} \dalpha$
 is the surface element on $\sph^{\nn-1}$ and  
$ \sin(\beta_1)^{\nn-1} \dots \sin(\beta_\mm)^{\ddim-2} \dS_{\nn-1}(\alpha) \dbeta $ the surface element on $\sph^{\ddim-1}$. 
Expressing the  spherical Radon transform $\Mo_{\yy,s}$ in terms of integrals over the parameter set $(0,2\pi)\times(0,\pi)^{\nn-2}\times(0,\pi)^\mm$ therefore yields
\begin{align*}
&(\Mo_{\xx,\yy} f) (\xx, \yy, r)
\\
&=\frac1{\sabs{\sph^{\ddim-1}}}
\int_{(0,\pi)^\mm}
\int_{(0,2\pi)\times(0,\pi)^{\nn-2}}
f\kl{ \xx+ r\sin(\beta_1)\dots\sin(\beta_\mm)  \Phi_1(\alpha), \yy + r\Phi_2(\beta)} \\
& \hspace{0.4\textwidth}\times\sin(\beta_1)^{\nn-1} \dots \sin(\beta_\mm)^{\ddim-2} \dS_{\nn-1}(\alpha)\dbeta
\\
&=\frac{\sabs{\sph^{\nn-1}}}{\sabs{\sph^{\ddim-1}}}\int_{(0,\pi)^\mm}
(\Mo_\xx f)\kl{\xx, \yy+ r\Phi_2(\beta), r\sin(\beta_1)\cdots\sin(\beta_\mm)} \sin(\beta_1)^{\nn-1}\dots\sin(\beta_\mm)^{\ddim-2}\dbeta
\\
&=\frac{\sabs{\sph^{\nn-1}} \, \sabs{r}^{1-\nn} }{\sabs{\sph^{\ddim-1}}} \int_{(0,\pi)^\mm}
( \Mo_\xx f)\kl{\xx,\yy+r\Phi_2(\beta), r\sin(\beta_1)\dots\sin(\beta_\mm)}\sabs{r}^{\nn-1}\sin(\beta_1)^{\nn-1}\cdots\sin(\beta_\mm)^{\ddim-2}\dbeta 
\\
&=\frac{\sabs{\sph^{\nn-1}} \, \sabs{r}^{1-\nn}}{\sabs{\sph^{\ddim-1}}}\int_{(0,\pi)^\mm}
\kl{ \sabs{s}^{\nn-1} \Mo_\xx f}\kl{\xx,\yy+r\Phi_2(\beta),r\sin(\beta_1)\dots\sin(\beta_\mm)}
\sin(\beta_2)\dots\sin(\beta_\mm)^{\mm-1}\dbeta \,.
\end{align*}
Next notice that $\beta \mapsto \kl{ \Phi_2(\beta), \sin(\beta_1)\dots\sin(\beta_\mm)}$ is a  parameterization of  $ \sph^\mm_+ \coloneqq  \sph^\mm \cap \set{(\yy, s) \in \RR^{\mm+1} \colon s  \geq 0 }$ and that $\sin(\beta_2)\dots\sin(\beta_\mm)^{\mm-1}\dbeta$ is the corresponding  surface element. 
Because $\sabs{s}^{\nn-1} \Mo_\xx f$ is even in  $s$,
\begin{align*}
(&\Mo_{\xx,\yy}f)(\xx, \yy, r)
\\
& =\frac{ \sabs{r}^{1-\nn}}{2}\,
\frac{ \sabs{\sph^{\nn-1}}}{\sabs{\sph^{\ddim-1}}}\int_{(0,2\pi)\times(0,\pi)^{\mm-1}}(\sabs{s}^{\nn-1} \Mo_\xx f)(\xx,\yy+r \Phi_2(\beta), r\sin(\beta_1)\cdots\sin(\beta_\mm)) \sin\beta_2\cdots\sin(\beta_\mm)^{\mm-1}\dbeta
\\
& =
\frac{ \sabs{r}^{1-\nn}}{2}\,
\frac{\sabs{\sph^\mm} \, \sabs{\sph^{\nn-1}}}{\sabs{\sph^{\ddim-1}}} ( \Mo_{\yy,s} \sabs{s}^{\nn-1} \Mo_\xx f) (\xx, \yy, r) \,,
\end{align*}
which is the desired identity.  
\end{proof}

According to  Theorem~\ref{thm:factorization} a possible strategy to recover the  function $f \colon \RR^\ddim \to \RR$ from its spherical Radon transform $\Mo_{\xx,\yy} f$ is by first inverting $\Mo_{\yy,s} $ and subsequently inverting $\Mo_\xx$.
Formulas  and algorithms for inverting the  
spherical Radon transform $\Mo_{\yy,s}$ with centers on a hyperplane are well known; see  \cite{andersson88,Bel09,BuhKar78,fawcett85,klein03,narayananr10}.
If $\sd$ is the boundary of a bounded domain $B \subseteq \RR^\mm$, then  efficient reconstruction algorithms  for recovering a function with  support in $B$ are known, including time  reversal  \cite{burgholzermhp07,HriKucNgu08,Pal14,Treeby10} and series expansion methods \cite{AgrKuc07,kunyansky071}. Hence in such situations Theorem~\ref{thm:factorization} yields efficient
reconstruction algorithms.

Further, explicit formulas for inverting $\Mo_\xx$ are known for some particular  surfaces $\sd$, including spheres \cite{xuw05,kunyansky07,finchhr07,finchpr04,Ngu09} and ellipsoids \cite{AnsFilMadSey13,Hal14a,Hal14b,Nat12,Pal12,Sal14}. Combining inversion formulas for
 $\Mo_{\yy,s} $ and $\Mo_\xx$ gives us explicit formulas for  recovering a function from its  spherical  Radon transform on elliptical cylinders.
In the following section we derive  such formulas for elliptical cylinders.

\section{Explicit inversion formulas for elliptical cylinders}
\label{sec:inv}

In this section  we  consider the case where $\sd = \partial E_A$ is the boundary of the solid ellipsoid
$E_A\coloneqq
\{\xx\in \RR^{\nn}: \sabs{A^{-1}\xx} < 1 \}$,
where $A$ is a diagonal $n\times n$ matrix
with positive diagonal entries $a_1,\dotsc, a_{\nn}$.
We use the factorization of the spherical Radon transform
to derive explicit inversion formulas for elliptical and spherical cylinders. 

We define  the partial spherical back-projections of  $\g  \colon \partial E_A \times \RR^{\mm+1} \to \RR$ by  
\begin{align*}
	(\Mo_{\yy,s}^\sharp \g)
	(\xx, \yy, s)
	&\coloneqq
	\int_{\RR^\mm}
	\g \kl{  \xx,\yy',\sqrt{\sabs{\yy-\yy'}^2 + s^2} \, }
	\mathrm{d}\yy'  && 
	(\xx, \yy, s) \in \partial E_A \times \RR^{\mm+1}\,,
\\
(\Mo_\xx^\sharp \g) (\xx,\yy) 
&\coloneqq
\int_{\sph^{\nn-1}}
\g(A\sigma,\yy,|\xx-A\sigma|) \dS(\sigma) 
&& 
(\xx, \yy) \in  E_A \times \RR^\mm \,.
\end{align*}
Note that the integral in the definition  of  $\Mo_{\yy,s}^\sharp \g$ may not converge absolutely.  In such a situation  $\Mo_{\yy,s}^\sharp \g$ can be defined via extension to the space of temperate  distributions; see  \cite{andersson88} for details. 
Further, we  denote by $\Delta_{\yy,s}$ the Laplacian with respect to $(\yy,s)$, $\Delta_{A \xx}\coloneqq \sum_{i=1}^{\nn} (1/a_i^2)\partial^2_{x_i}$ the Laplacian with respect to $A\xx$, 
$\Ho_s$ the Hilbert transform, and by $\D_s := (2s)^{-1} \partial_s$ the derivative with respect to $s^2$. 

Theorem \ref{thm:factorization} in combination with the inversion formulas for the  spherical  Radon transform on a planar center-set and an elliptical center-set $\partial E_A$ yields explicit inversion formulas for
$\Mo_{\xx,\yy}$.
For example, combining the inversion formulas of \cite{andersson88} and \cite{Hal14b,Sal14}, we get the following result.

\begin{thm}[Inversion\label{thm:inv-ell} formula for elliptic cylinders]
For $f \in C_c^\infty(E_A \times \RR^\mm)$ and $(\xx, \yy)\in E_A \times\RR^\mm$, we have
\begin{equation} \label{eq:inv-ell-cyl}
f(\xx, \yy)
=
\frac{2^{\nn-2} \det(A)}{(\nn-2)!(2\pi)^\mm}
\frac{\sabs{\sph^{\ddim-1}}}{\sabs{\sph^{\nn-2}}\sabs{\sph^{\nn-1}}}
 \kl{ \Delta_{A\xx} \Mo_\xx^\sharp \Bo_s
 (- \Delta_{\yy,s})^{(\mm-1)/2}  \Mo_{\yy,s}^\sharp \abs{r}^{\nn-1} \Mo_{\xx,\yy} f}
(\xx,\yy) 
\,,
\end{equation}
with
\begin{equation} \label{eq:bo}
(\Bo_s  \g)\kl{\xx,\yy, s}
\coloneqq
\begin{cases}
(-1)^{\nn/2}
\kl{ \D_s^{\nn-2}  \g }\kl{\xx,\yy, s}
 & \text{if $\nn \geq 2 $ is even,}
  \\
(-1)^{(\nn-1)/2}
	\kl{ s  \D_s^{\nn-2}  \Ho_s \g }
	\kl{\xx,\yy, s}
	& \text{if $\nn \geq 3$ is odd}
\,.
\end{cases}
\end{equation}
\end{thm}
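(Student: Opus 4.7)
The plan is to prove \eqref{eq:inv-ell-cyl} by chaining together three ingredients: the decomposition of Theorem~\ref{thm:factorization}, Andersson's inversion formula for the spherical Radon transform with a planar center-set~\cite{andersson88}, and the explicit ellipsoidal inversion formula from~\cite{Hal14b,Sal14}. The basic idea is that the factorization lets us peel off the two partial transforms in succession.

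First, I would rearrange the factorization of Theorem~\ref{thm:factorization} in the form
\begin{equation*}
|r|^{\nn-1} \Mo_{\xx,\yy} f = C_{\nn,\mm} \, \Mo_{\yy,s}\bigl( |s|^{\nn-1} \Mo_\xx f \bigr).
\end{equation*}
Since $f \in C_c^\infty(E_A \times \RR^\mm)$, the function $|s|^{\nn-1} \Mo_\xx f$ is smooth, even in $s$, and has sufficient decay in $(\yy,s)$ for Andersson's planar-center inversion formula to apply. Solving for $|s|^{\nn-1} \Mo_\xx f$ would yield
\begin{equation*}
|s|^{\nn-1} (\Mo_\xx f)(\xx,\yy,s)
= \frac{1}{C_{\nn,\mm}(2\pi)^\mm}
\bigl( (-\Delta_{\yy,s})^{(\mm-1)/2} \Mo_{\yy,s}^\sharp |r|^{\nn-1} \Mo_{\xx,\yy} f \bigr)(\xx,\yy,s),
\end{equation*}
fully removing the $(\yy,s)$-integration from the data.

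Next, I would invoke the explicit inversion formula for the ellipsoidal spherical Radon transform from \cite{Hal14b,Sal14}. Written in the form convenient here, this formula recovers $f(\xx,\yy)$ from $|s|^{\nn-1}(\Mo_\xx f)(\xx,\yy,s)$ via
\begin{equation*}
f(\xx,\yy)
= \frac{2^{\nn-2}\det(A)}{(\nn-2)!\, \sabs{\sph^{\nn-2}}}
\bigl( \Delta_{A\xx} \Mo_\xx^\sharp \Bo_s \bigl[ |s|^{\nn-1} \Mo_\xx f \bigr] \bigr)(\xx,\yy),
\end{equation*}
with $\Bo_s$ given by \eqref{eq:bo}; the split between even and odd $\nn$ reflects the use of the Hilbert transform in odd-dimensional wave representations, and the operator $\Bo_s$ is precisely designed to absorb the $|s|^{\nn-1}$ factor produced by the factorization into the $s$-differentiation. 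Substituting the expression for $|s|^{\nn-1} \Mo_\xx f$ from the previous paragraph and collecting prefactors then produces \eqref{eq:inv-ell-cyl}.

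The principal obstacle is constant bookkeeping: one must verify that $C_{\nn,\mm}^{-1}$ from inverting the factorization, the factor $(2\pi)^{-\mm}$ from Andersson's formula, and the leading coefficient of the ellipsoidal inversion combine exactly to $2^{\nn-2}\det(A)/((\nn-2)!(2\pi)^\mm) \cdot \sabs{\sph^{\ddim-1}}/(\sabs{\sph^{\nn-2}}\sabs{\sph^{\nn-1}})$; this is where the surface-area ratio characteristic of the cylindrical geometry enters. A secondary technical point is that $\Mo_{\yy,s}^\sharp$ is in general only defined in the sense of tempered distributions, but since $f$ has compact support and $\Mo_\xx f$ inherits this support in $\yy$, the formulas are justified on classical functions exactly as in \cite{andersson88}. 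Once the constants are verified, no further analytic work is required, because each intermediate inversion holds for arbitrary smooth compactly supported inputs.
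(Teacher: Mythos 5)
Your architecture is the same as the paper's (factorization, then the planar inversion, then the ellipsoidal inversion), but both intermediate formulas are misquoted in a way that leaves a genuine gap. First, the planar step: the spherical Radon transform with centers on the hyperplane $\RR^\mm \subseteq \RR^{\mm+1}$ has a normal operator that smooths by order $\mm$, so its inverse requires a filter of order $\mm$. Your formula
$\sabs{s}^{\nn-1}\Mo_\xx f = c\,(-\Delta_{\yy,s})^{(\mm-1)/2}\Mo_{\yy,s}^\sharp \sabs{r}^{\nn-1}\Mo_{\xx,\yy}f$
applies only an order-$(\mm-1)$ filter and is therefore false; Andersson's formula carries an additional one-dimensional radial filter, and the correct reduction (the paper's \eqref{eq:reduce}) reads
$\Mo_\xx f = c\,\sabs{s}^{1-\nn}(-\Delta_{\yy,s})^{(\mm-1)/2}\Ho_s\partial_s\Mo_{\yy,s}^\sharp \sabs{r}^{\nn-1}\Mo_{\xx,\yy}f$. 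Already for $\nn=2$, $\mm=1$ your version asserts that $\Mo_{\yy,s}^\sharp\Mo_{\yy,s}$ is (a constant times) the identity, which it is not. Second, the ellipsoidal step: the formula you attribute to \cite{Hal14b,Sal14}, namely $f = c\,\Delta_{A\xx}\Mo_\xx^\sharp\Bo_s[\sabs{s}^{\nn-1}\Mo_\xx f]$ with $\Bo_s$ from \eqref{eq:bo}, is not the formula in those references (restated as Lemma~\ref{lem:inv-ell} with the operator $\Co_s$, which for $\nn\geq 3$ involves multiplication by $s^{\nn-2}$ and only $\nn-3$ applications of $\D_s$). Your version carries an extra factor of $\Ho_s\partial_s = \sabs{D_s}$ relative to the true one; e.g.\ for $\nn=3$ it would require $\Mo_\xx^\sharp\bigl[\partial_s\Ho_s(s^2\Mo_\xx f)\bigr] = \Mo_\xx^\sharp\bigl[s^2\Mo_\xx f\bigr]$ after $\Delta_{A\xx}$, which does not hold. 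You have, in effect, moved the missing radial filter from the planar formula into the ellipsoidal formula so that the endpoint looks right, but neither intermediate statement is valid, and nothing in the write-up justifies why the two errors should cancel.

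This also means the real content of the proof is missing: the operator $\Bo_s$ does not arise by ``absorbing the $\sabs{s}^{\nn-1}$ factor'' (that factor cancels against the $\sabs{s}^{1-\nn}$ in \eqref{eq:reduce} and the $s^{\nn-2}$ inside $\Co_s$). Rather, $\Bo_s$ is the composition $\Co_s\circ\sabs{s}^{1-\nn}\circ\Ho_s\partial_s$, and simplifying it to the form \eqref{eq:bo} is where the work lies: for odd $\nn$ one uses $s^{-1}\partial_s = 2\D_s$; for even $\nn\geq 4$ one must cancel the Hilbert transform already present in $\Co_s$ against the new one via the identity $\Ho_s\D_s\Ho_s = -\D_s$ on even functions (Lemma~\ref{lem:HDH}); and for $\nn=2$ one needs an integration by parts together with $\Ho_s\Ho_s = -\mathrm{Id}$ to reduce the logarithmic kernel. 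The constant discrepancy you flag (your prefactors combine to the paper's times $2/\sabs{\sph^\mm}$) is a symptom of the same misquotation rather than mere bookkeeping. To repair the argument, quote \eqref{eq:reduce} and Lemma~\ref{lem:inv-ell} correctly and then carry out the recombination of the radial filters explicitly, case by case in the parity of $\nn$.
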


\begin{proof}
See \ref{app:thm1}.
\end{proof}

In the case of a circular cylinder with $\nn=2$  we have the following
simpler inversion formula.

\begin{thm}[Inversion\label{thm:inv-disc} formula for circular cylinder]
Let $D_R \subseteq \RR^2$ denote a disc of radius $R$ centered at the origin and let $f \in C_c^\infty (D_R \times \RR^\mm)$. 
Then, for every $(\xx, \yy) \in  D_R \times\RR^\mm$,
we have
\begin{equation}\label{eq:inv-sc}
	f(\xx, \yy)
	=-
\frac{\sabs{\sph^{\mm+1}}}{2 (2\pi)^{\mm+1}}	\left( \Mo_\xx^\sharp
	(-\Delta_{\yy,s})^{(\mm-1)/2}
	\partial_s^2 \Mo_{\yy,s}^\sharp \abs{r} \Mo_{\xx,\yy} f \right)
	(\xx, \yy) \,.
\end{equation}
\end{thm}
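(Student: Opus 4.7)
My strategy is to combine the decomposition of Theorem~\ref{thm:factorization} with known closed-form inversion formulas for each of the two lower-dimensional spherical Radon transforms it produces. Specializing Theorem~\ref{thm:factorization} to $\nn = 2$ and using $\sabs{\sph^1} = 2\pi$ gives
\begin{equation*}
\sabs{r}\,\Mo_{\xx,\yy} f \;=\; \frac{\pi\,\sabs{\sph^\mm}}{\sabs{\sph^{\mm+1}}}\,\Mo_{\yy,s}\kl{\sabs{s}\,\Mo_{\xx} f},
\end{equation*}
so Theorem~\ref{thm:inv-disc} reduces to inverting, successively, the planar spherical Radon transform $\Mo_{\yy,s}$ on the hyperplane $\{s = 0\} \subseteq \RR^{\mm+1}$ and the circular spherical Radon transform $\Mo_{\xx}$ on $\partial D_R \subseteq \RR^2$.

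For the first inversion I will use a classical back-projection formula for the planar spherical Radon transform (Andersson 1988; see also \cite{Bel09,narayananr10}): for $H \in C_c^\infty(\RR^{\mm+1})$ even in $s$ one has $H = \gamma_\mm\,(-\Delta_{\yy,s})^{(\mm-1)/2}\,\mathcal{P}_s\,\Mo_{\yy,s}^\sharp\,\Mo_{\yy,s} H$ with an explicit constant $\gamma_\mm$ and a fixed operator $\mathcal{P}_s$ built from $\partial_s$ (and a Hilbert transform $\Ho_s$ when $\mm$ is even). Applied to the even-in-$s$ function $H(\yy,s) = \sabs{s}\,\Mo_\xx f(\xx,\yy,s)$, this recovers $\sabs{s}\,\Mo_\xx f$ from $\sabs{r}\,\Mo_{\xx,\yy} f$ via the decomposition above. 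For the second inversion I will invoke a closed-form inversion of the 2D circular spherical Radon transform on $\partial D_R$ (cf.\ \cite{finchhr07,xuw05,kunyansky07}), brought to the filtered back-projection form $f(\xx,\yy) = \beta\,\Mo_\xx^\sharp\,\mathcal{Q}_s\kl{\sabs{s}\,\Mo_\xx f}(\xx,\yy)$ with an explicit constant $\beta$ and differential operator $\mathcal{Q}_s$. The two representations are chosen so that $\mathcal{P}_s \circ \mathcal{Q}_s^{-1}$ simplifies exactly to $\partial_s^2$, and so that $\gamma_\mm\,\beta$, multiplied by the surface-area factors coming from Step~1, collapses to $-\sabs{\sph^{\mm+1}}/(2(2\pi)^{\mm+1})$. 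Assembling the two inversions with the decomposition then yields~\eqref{eq:inv-sc}.

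The chief technical difficulty is selecting compatible inversion formulas for the two constituent transforms so that their composition reduces to the clean operator $\partial_s^2$ with the correct constant; different formally equivalent choices differ by Hilbert transforms or by extra $\partial_s$ factors, so careful bookkeeping of constants and of parity-dependent terms is essential. A potentially cleaner alternative route is to specialize Theorem~\ref{thm:inv-ell} directly to $A = R\,I_2$: the constants collapse to $(\nn-2)! = 1$, $2^{\nn-2} = 1$, $\sabs{\sph^{\nn-2}}\sabs{\sph^{\nn-1}} = 4\pi$, $\det(A) = R^2$, $\Delta_{A\xx} = R^{-2}\Delta_\xx$, and $\Bo_s = -\mathrm{id}$ (since $\nn=2$ is even and $\D_s^{\nn-2} = \mathrm{id}$), giving $f = -\frac{\sabs{\sph^{\mm+1}}}{2(2\pi)^{\mm+1}}\,\Delta_\xx\,\Mo_\xx^\sharp\,(-\Delta_{\yy,s})^{(\mm-1)/2}\,\Mo_{\yy,s}^\sharp\,\sabs{r}\,\Mo_{\xx,\yy} f$. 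One then converts $\Delta_\xx\,\Mo_\xx^\sharp$ into $\Mo_\xx^\sharp\,\partial_s^2$ via the radial-Laplacian identity $\Delta_\xx\,\Mo_\xx^\sharp G = \Mo_\xx^\sharp(\partial_s^2 G + s^{-1}\partial_s G)$ specific to $\RR^2$, and shows that the residual $s^{-1}\partial_s$ term contributes trivially under the support hypotheses on $f$—a 2D Darboux-type cancellation that underpins the ``simpler'' character of the circular case.
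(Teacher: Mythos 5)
Your overall architecture is sound, and your ``Route 2'' (specializing Theorem~\ref{thm:inv-ell} to $A=R\,I_2$) is a legitimate alternative to the paper's argument: the constants do collapse exactly as you say, giving $f=-\tfrac{\sabs{\sph^{\mm+1}}}{2(2\pi)^{\mm+1}}\,\Delta_\xx\Mo_\xx^\sharp(-\Delta_{\yy,s})^{(\mm-1)/2}\Mo_{\yy,s}^\sharp\abs{r}\Mo_{\xx,\yy}f$, and the radial-Laplacian identity $\Delta_\xx\Mo_\xx^\sharp G=\Mo_\xx^\sharp\kl{\partial_s^2 G+s^{-1}\partial_s G}$ is correct in $\RR^2$. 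The paper takes a different route: it proves a circular-means inversion formula $f=\tfrac12\Mo_\xx^\sharp\Ho_s\partial_s\abs{s}\Mo_\xx f$ (Lemma~\ref{lem:fhr}) and composes it with the planar inversion \eqref{eq:reduce}, collapsing $\Ho_s\partial_s\,\Ho_s\partial_s$ to $-\partial_s^2$.

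The genuine gap is your final step. In Route~2 you assert that the residual term $\Mo_\xx^\sharp\kl{s^{-1}\partial_s G}$ ``contributes trivially under the support hypotheses on $f$''; in Route~1 the same issue is hidden in the unproven claim that the two filters can be ``chosen so that'' their composition is exactly $\partial_s^2$. This vanishing is not a support or Darboux-type cancellation: for a generic $G$ on $\partial D_R\times\RR^{\mm+1}$ the term does not vanish (take $G=s^2$; then $\Mo_\xx^\sharp(s^{-1}\partial_s G)=2\sabs{\partial D_R}/R\neq 0$). It holds only because $G$ lies in the range of the circular transform: from \eqref{eq:reduce} with $\nn=2$ one gets $\partial_s G=-c\,\Ho_s\kl{\abs{s}\Mo_\xx f}$ for a constant $c$, and since $\abs{s}\Mo_\xx f$ is even in $s$, a short computation shows that $\Mo_\xx^\sharp\kl{s^{-1}\partial_s G}(\xx,\yy)$ equals, up to a nonzero constant, $\int_{\partial D_R}\mathrm{P.V.}\!\int_0^\infty\frac{2s}{s^2-\sabs{\xx'-\xx}^2}\,\Mo_\xx f(\xx',\yy,s)\ds\dS(\xx')$. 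The vanishing of this expression is precisely the Finch--Rakesh moment/range condition, which is identity \eqref{eq:identityn} in the paper's proof of Lemma~\ref{lem:fhr} and is the actual mathematical heart of the theorem. Equivalently, the circular FBP formula you need in Route~1 is the $\Ho_s\partial_s\abs{s}$ variant of the Finch--Haltmeier--Rakesh formula, which is not literally in \cite{finchhr07} and whose derivation requires the same range identity. Until you cite or prove that identity, neither of your routes closes.
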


\begin{proof}
See~\ref{app:thm2}.
\end{proof}

In the case that $\mm \geq 1$ is an odd  number, then  $(- \Delta_{\yy,s})^{(\mm-1)/2}$ is a differential operator and therefore local.
Consequently, in such a situation, the  inversion formula  \eqref{eq:inv-sc} is
local:  Recovering the function $f$ at any point $(\xx, \yy)$ uses only integrals over those spheres which pass through an arbitrarily small neighborhood of that point. 
In the case that $\mm \geq 2$ is an  even   number, then  $(-\Delta_{\yy,s})^{(\mm-1)/2}$ is a non-integer power of the Laplacian. In such a situation,  \eqref{eq:inv-sc} is non-local. 
The same argumentation shows that  for even $\nn$, the inversion formula \eqref{eq:inv-ell-cyl} is also local for  $\mm$ odd  and non-local for  $\mm$ even.  
Such a different behavior for even and odd dimensions  is also well known for the standard inversion formulas of the classical Radon transform, as well as for known inversion formulas for the  spherical Radon transform with centers on spheres or ellipses.

\begin{rmk}[Elliptical cylinders in $\RR^3$]
Consider the important special case of an elliptical cylinder $E_A \times \RR$ in $\RR^3$, where  $E_A \coloneqq \{ (x_1,x_2)\in \RR^2: (x_1/a_1)^2  + (x_2/a_2)^2< 1 \}$. Then \eqref{eq:inv-ell-cyl}  simplifies to 
$f =-\det\kl{A}/(2\pi) \,\Delta_{A\xx} \Mo_\xx^\sharp \Mo_{\yy,s}^\sharp  \abs{r} \Mo_{\xx,\yy}$ which can be written in the form 
\begin{equation}  \label{eq:inv3d}
f(\xx, \yy)
	=-\frac{\det\kl{A}}{2\pi}
	\Delta_{A\xx}
	\int_{\sph^1}
	\left[
	\int_{\RR}
	(|r| \Mo_{\xx,\yy} f) \kl{A\sigma,\yy',\sqrt{\abs{\yy-\yy'}^2+s^2}}
	\dyy' \right]_{s=|\xx - A\sigma|}\dS(\sigma) \,.\end{equation}
In Section~\ref{sec:num} we present numerical results using  \eqref{eq:inv3d}. There we also numerically compare \eqref{eq:inv3d} to the  universal backprojection formula (UBP) 
\begin{equation}\label{eq:ubp3d}
f(\xx,\yy)
=\frac{1}{2\pi}
\int_{\partial E_A }
\boldsymbol{\nu}_\xx \cdot (\xx-\xx')
\left[ \int_{\RR} \kl{r^{-1} \partial_r  r^{-1} \partial_r r \Mo_{\xx,\yy} f}\kl{\xx',\yy', \sqrt{\abs{\yy-\yy'}^2+s^2
} }\dyy' \right]_{s=|\xx - \xx'|} \dS(\xx')\,,
\end{equation}
where $\boldsymbol{\nu}_\xx$ is the outer unit normal  to $\partial E_A$. In \cite{xuw05},  the UBP has been shown to provide an  exact reconstruction for spherical means on planes, spheres,
and circular cylinders in $\RR^3$. In \cite{kunyansky07,Nat12,Hal14a,HalPer15,HalPer15b}, this result has been generalized to spheres, elliptical cylinders, and other quadrics in arbitrary spatial dimensions.\eor
\end{rmk}

\section{Numerical implementation}
\label{sec:num}

In the previous section we derived several inversion formulas for recovering a function supported in a cylinder.
In this section we report the numerical implementations of these
formulas and present some numerical results.
We restrict ourselves to the
special important case of  elliptical cylinders $E_A \times  \RR$ in $\RR^3$.

\subsection{Discrete reconstruction algorithm}

In the numerical implementation the centers have to be restricted to a finite number of discrete samples on the truncated cylinder $\partial  E_A\times [-H,H]$ of finite height.
Further the radii are restricted to  discrete values in $[0,r_0]$.
Here $2H$ is the height of the detection cylinder and the maximal radius $r_0$ is chosen in such a way that $(\Mo_{\xx,\yy} f) (\xx,\yy, r) = 0$ for $(\xx,\yy) \in E_A \times [-H,H]$ and $r > r_0$.
We model these discrete data by
\begin{equation*}
    \gt[\ksig,\kz,\kr] \coloneqq
    (\Mo_{\xx,\yy} f) \left( \xx[\ksig] ,   H \, \frac{\kz}{\Kz},  r_0  \frac{\kr}{\Kr} \right)
    \quad \text{ for }
    (\ksig,\kz,\kr)  \in
    \set{0,\dots,\Ksig-1} \times \set{-\Kz,\dots,\Kz}  \times\set{0,\dots,\Kr} \,,
\end{equation*}
where  $\xx[\ksig] \coloneqq  \kl{   a_1 \cos \kl{ 2\pi\ksig/\Ksig},   a_2\sin\kl{ 2\pi\ksig/\Ksig}}$
are the angular detector positions.
The aim is to derive a discrete algorithm, that outputs an approximation
\begin{equation*}
    \ft[\nt_1,\nt_2,\nt_3] \simeq
    f\kl{ a_1 \frac{\nt_1}{\Nt_x}, a_1\frac{\nt_2}{\Nt_x},  H \frac{\nt_3}{\Kz}}
   \quad \text{ for } (\nt_1,\nt_2,\nt_3)
    \in  \set{-\Nt_x,\dots,\Nt_x}^2 \times   \set{-\Kz/2, \dots, \Kz/2} \,.
\end{equation*}
We implemented the  inversion formulas \eqref{eq:inv3d} and
\eqref{eq:ubp3d} by replacing all filtration operations and all partial backprojections by a discrete approximation.
We shall outline this for the inversion formula~\eqref{eq:inv3d}, which reads $f  = -\det\kl{A}/(2\pi ) (\Delta_{A\xx} \Mo_\xx^\sharp   \Mo_{\yy,s}^\sharp r\Mo_{\xx,\yy} f)$.
For the numerical implementation, the operators $\Delta_{A\xx}$, $\Mo_\xx^\sharp$, $\Mo_{\yy,s}^\sharp$, and $r$ are replaced by finite-dimensional approximations and $\Mo _{\xx,\yy}f$ is replaced by the discrete data  $\gt$ resulting $    \ft :=
   -\det\kl{A}/(2\pi )
   ( {\tt\triangle_A} \;  \Mhorts  \Mvertts \, \rt) \gt $. Here $\rt$ is a discrete
multiplication operator,  ${\tt\triangle_A}$ is a finite difference approximation of $\Delta_{A \xx}$, and $\Mhorts$ and $\Mvertts$ are obtained
by discretizing  $\Mo_\xx^\sharp$ and $\Mo_{\yy,s}^\sharp$ with the composite trapezoidal rule and a linear
interpolation as described in~\cite{burgholzerbmghp07,finchhr07}.

\begin{figure}[thb!]
\begin{center}
 \includegraphics[width=\textwidth]{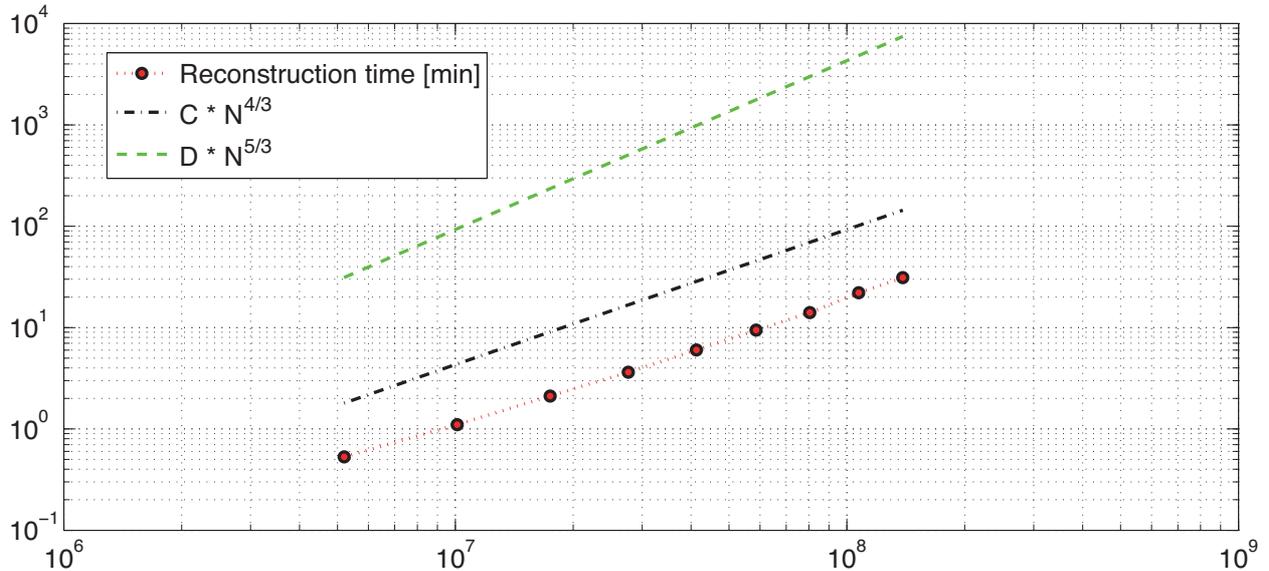}
 \end{center}
\caption{Log-log plot of the actual reconstruction time $\tt{T}(\Nt)$ of our Matlab implementation using \eqref{eq:inv3d} in dependence of the number of unknowns for   $\Nt_x =   100 , 125, \dots ,  300$. A comparison with the 
slopes of the log-log  plots of $\tt{C} * \Nt^{4/3}$ and  $\tt{D} * \Nt^{5/3}$
demonstrates  the theoretical behavior  $\tt{T}(\Nt)
= \mathcal O(\Nt^{4/3})$. The constants $\tt{C}$, $\tt{D}$ have been selected in simply in such a manner, that the above curves do not interfere with each other.}
 \label{fig:loglog}
\end{figure}

The partial  backprojection operators are two-dimensional discrete backprojections for  fixed
$\ksig$ and  $\nt_3$, respectively.  Assuming
$\Ksig , \Kz ,  \Kr = \mathcal O(\Nt_x)$, both  of these back-projection operators can be implemented
using $\mathcal O(\Nt_x^3)$ floating point operations  (see \cite{burgholzerbmghp07,finchhr07}).
Therefore, the numerical effort  of  evaluating \eqref{eq:inv3d} or \eqref{eq:ubp3d} is $\mathcal O(\Nt^{4/3})$, where $\Nt = (2 \Nt_x+1)^2 (\Kz+1)$ is the total number of unknowns. Note that the direct implementation of a standard 3D spherical back-projection operator requires $\mathcal O(\Nt^{5/3})$ floating point operations, see \cite{burgholzerbmghp07,kunyansky12,HalSchuSch05}. Typically we have $\Nt_x \geq 100$, and therefore our implementations are faster than the standard  3D spherical back-projections by at least two orders of magnitude.
Figure~\ref{fig:loglog}
shows a log-log plot of the reconstruction time using  \eqref{eq:inv3d} for   $\Nt_x \in \set{  100 , 125, \dots ,  300} $ (where we have chosen $\Ksig , \Kz$, and  $\Kr$ proportional to $\Nt_x$) verifying that the number of floating point operations is  $\mathcal O(\Nt^{4/3})$.  For comparison purpose, Figure~\ref{fig:loglog}
also shows  log-log  plots of $\Nt \mapsto \tt{C} *\Nt^{4/3}$ and  $\Nt \mapsto \tt{D} * \Nt^{5/3}$ for some constants $\tt{C}$, $\tt{D}$.

\begin{figure}[thb!]
\begin{center}
\includegraphics[width=0.32\textwidth]{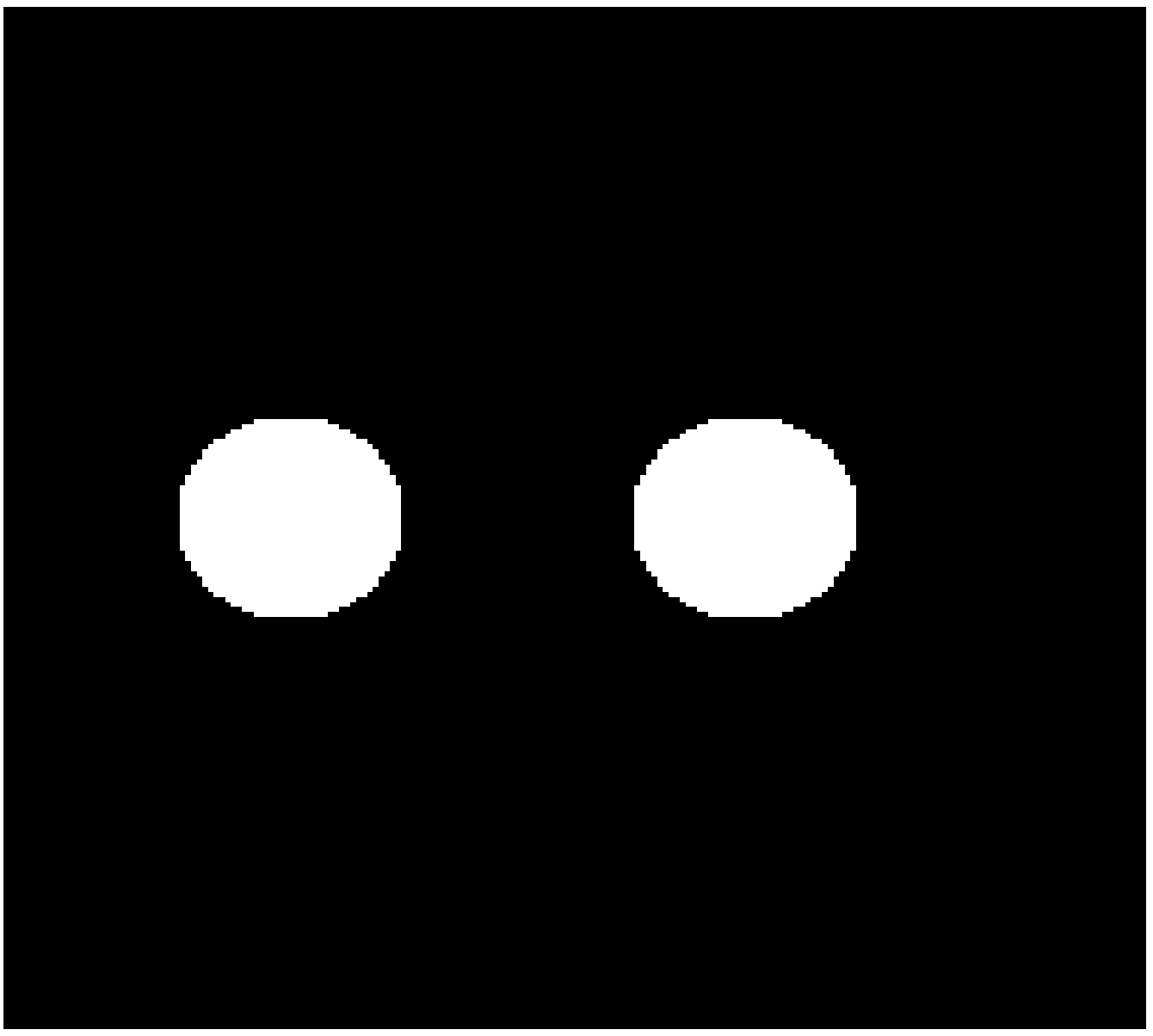}\;
\includegraphics[width=0.32\textwidth]{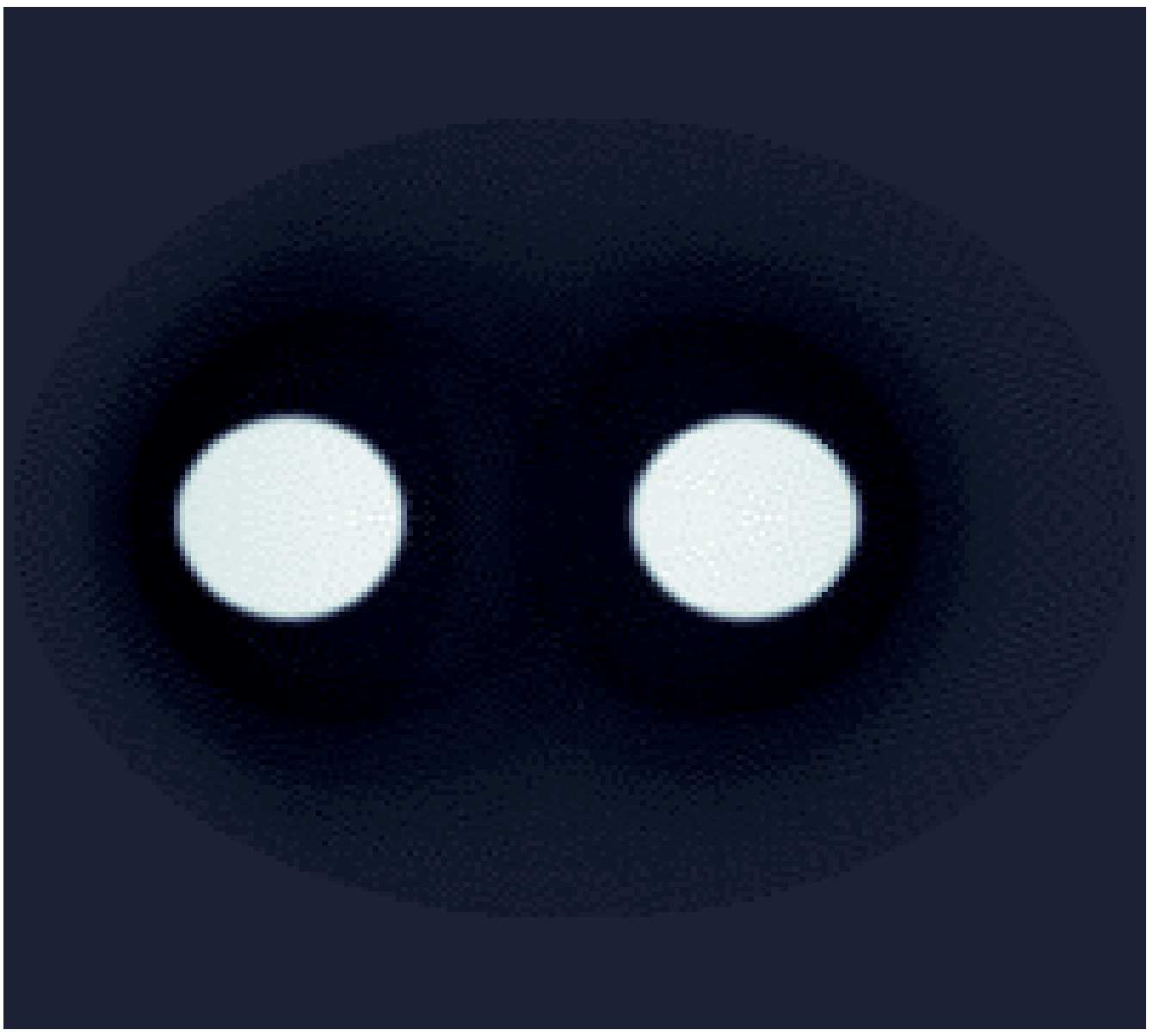}\;
\includegraphics[width=0.32\textwidth]{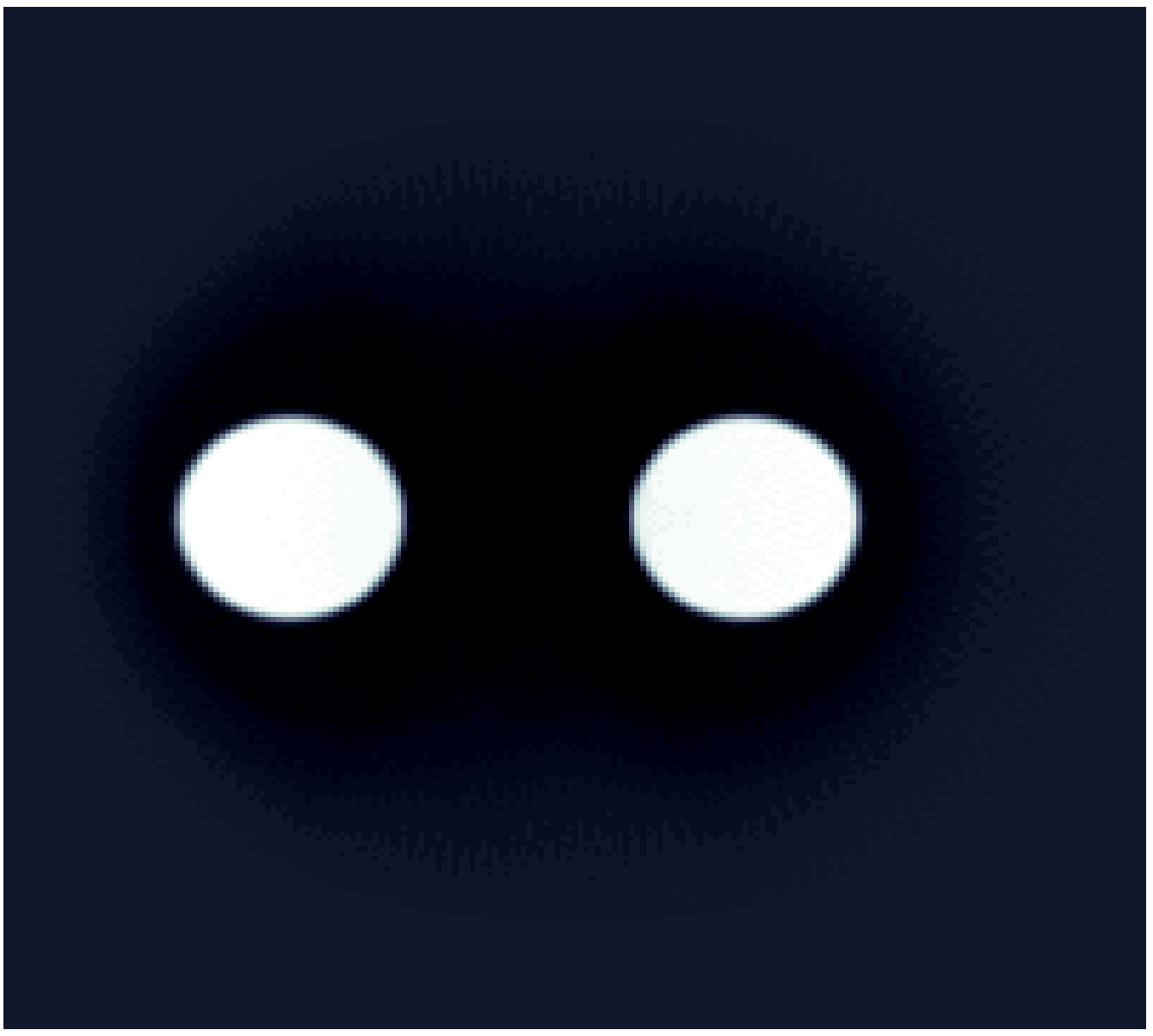}\\%[0.5em]
\includegraphics[width=0.32\textwidth]{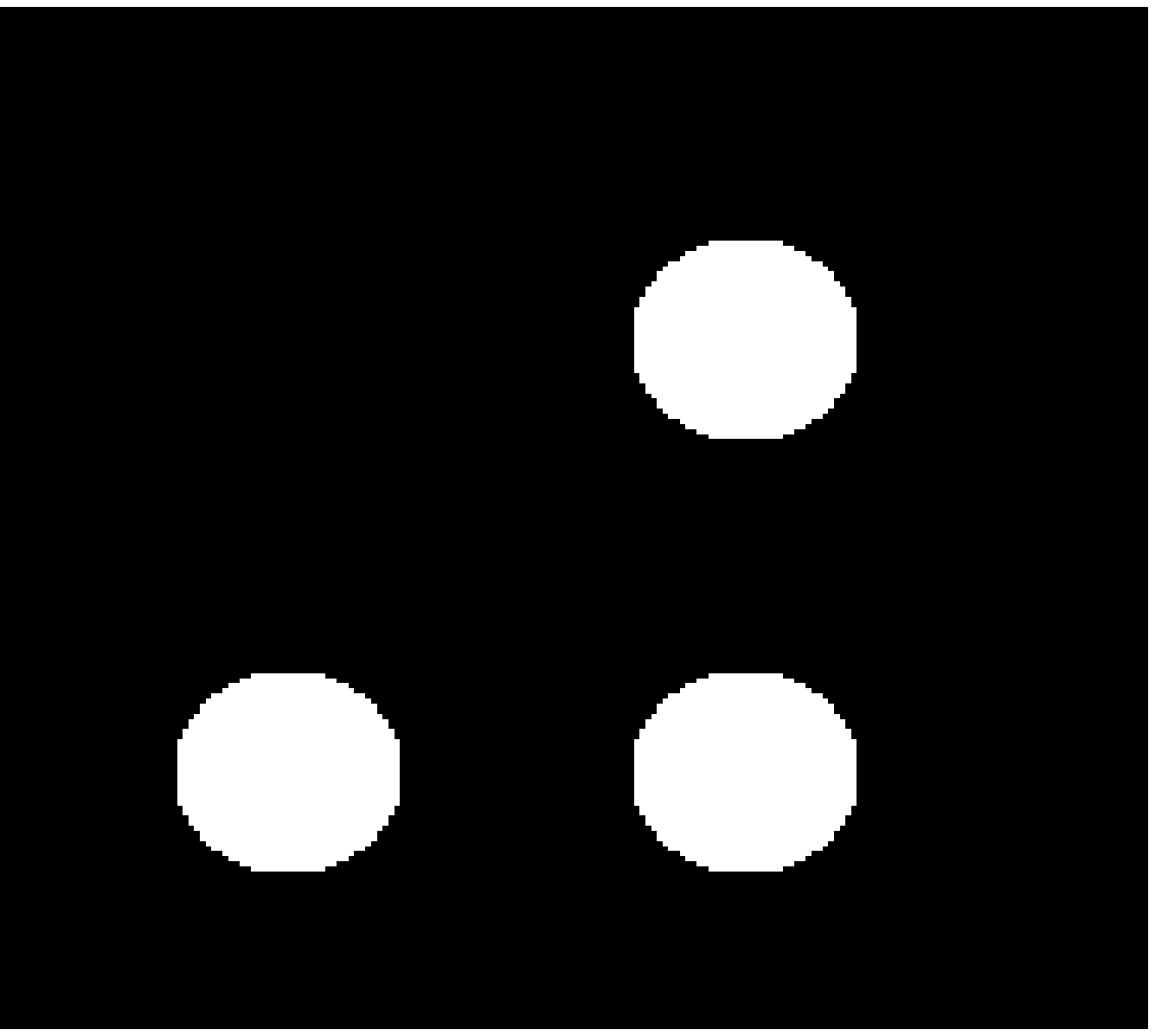}\;
\includegraphics[width=0.32\textwidth]{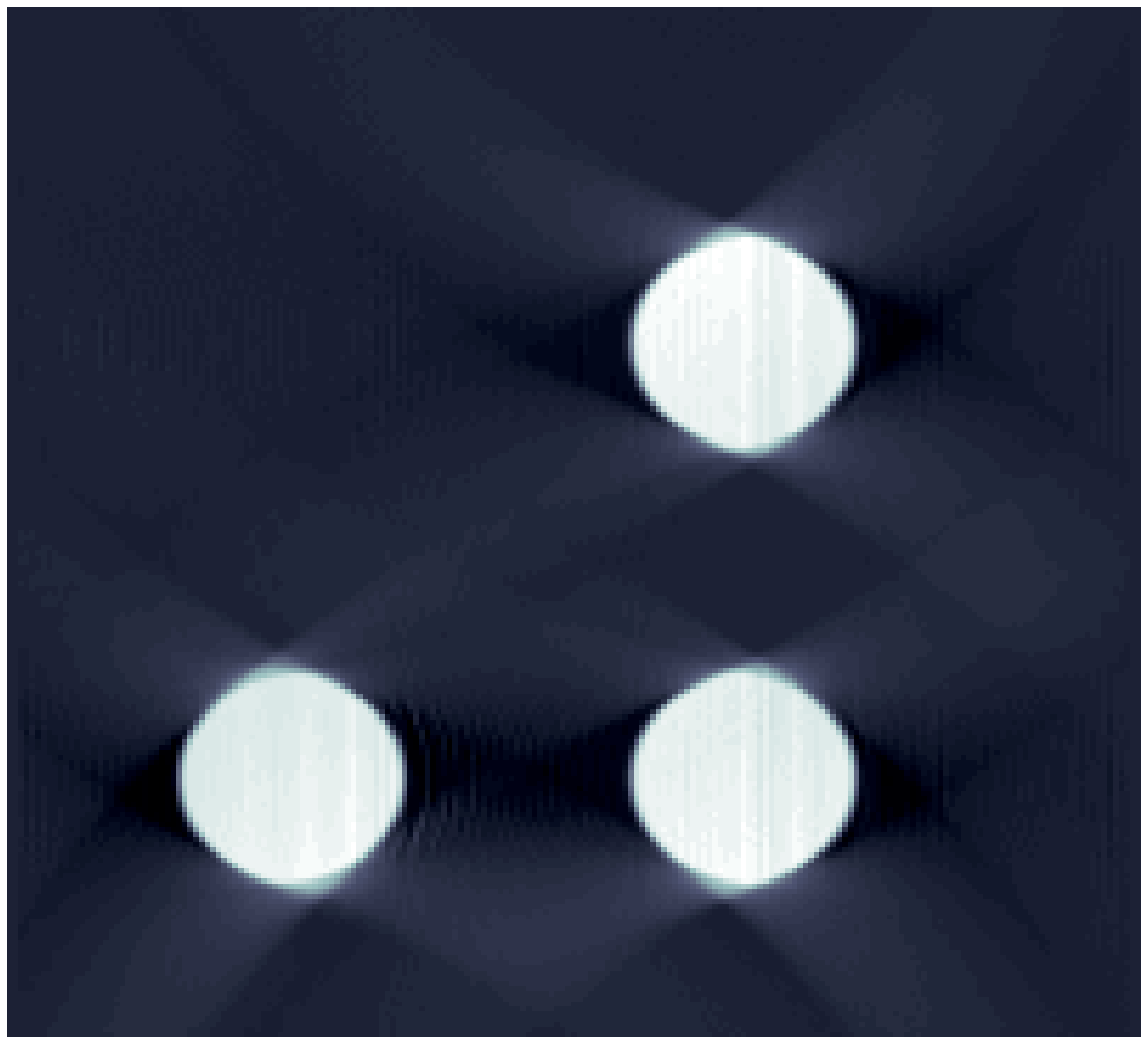}\;
\includegraphics[width=0.32\textwidth]{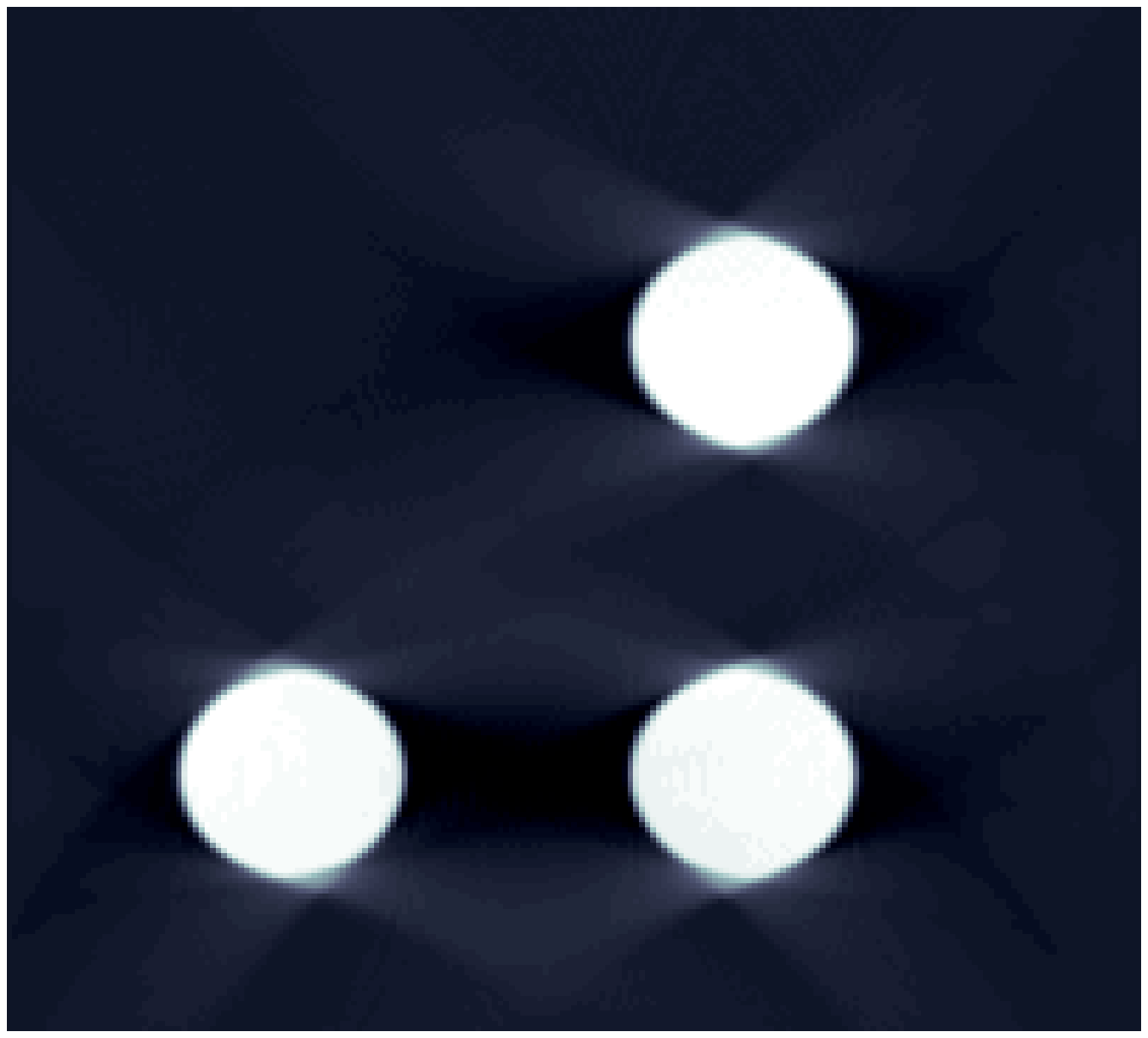}
\end{center}
\caption{ \textsc{Reconstruction results using simulated data.}
Top row (from left to right): Horizontal slices of the phantom, the reconstruction using \eqref{eq:inv3d}, and
 the reconstruction with the UBP \eqref{eq:ubp3d}. Bottom row: The same for the vertical slices.}
 \label{fig:slices}
\end{figure}

\subsection{Numerical results}

For the numerical simulations presented below we
consider  superpositions  of radially symmetric indicator functions. Figure~\ref{fig:slices}  shows the considered phantom and the numerical reconstruction using  the inversion formulas \eqref{eq:inv3d} and \eqref{eq:ubp3d} for  $a_1=1$, $a_2=0.8$,   $H= 2$ and $r_0 = 4$. For the discretization we used $\Ksig = 256$, $\Kz=200$,  $\Kr = 400$, and  $\Nt_x=100$.  In particular,  the step size 
$ H/\Kz = 0.01$  in the vertical direction is chosen equal to the step  size $a_1/\Nt_x = 0.01$ in the horizontal direction and also to the radial step size $r_0/\Kr = 0.01$. We thereby use approximately five times more data points than unknowns, which mainly accounts for the fact that the detector elements cover a larger area than the reconstruction volume;  while the step sizes are the same. Reducing the number of data points would  introduce artifacts due to spatial undersampling; see~\cite{haltmeier16sampling}.
One can see that  the reconstruction results for all inversion formulas are very good, especially in the horizontal  direction. In the vertical slice the vertical boundaries of the reconstructed balls are blurred.
Such artifacts are expected and arise from truncating the observation surface, see \cite{FriQui15,Kun08,Ngu15,paltaufnhb07,SteUhl13,XuWanAmbKuc04}.
All computations have been performed in  \textsc{Matlab}.
The three dimensional reconstructed data sets consisting of
$ (2\Nt_x^2+1)  ( \Kz+1) = 
8\,120\,601$ unknowns and $\Ksig   (2\Kz+1)(\Kr+1) =
41\,165\,056$ data points using  either \eqref{eq:inv3d} or  \eqref{eq:ubp3d} have been computed in 6 minutes on a MacBook Pro with $\unit[2.3]{GHz}$ Intel Core i7 processor.

\begin{figure}[thb!]
\begin{center}
 \includegraphics[width=0.24\textwidth]{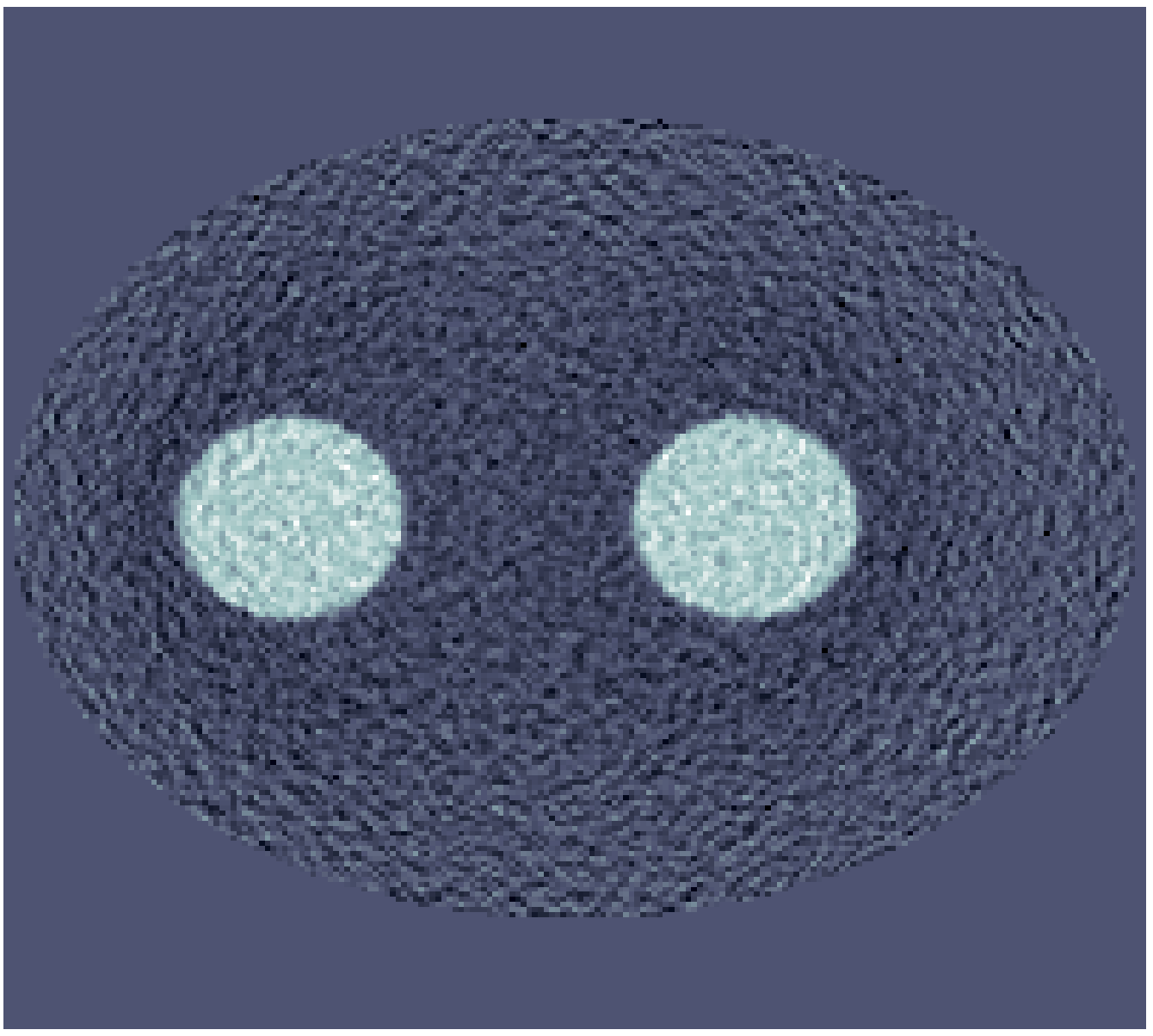}\;
 \includegraphics[width=0.24\textwidth]{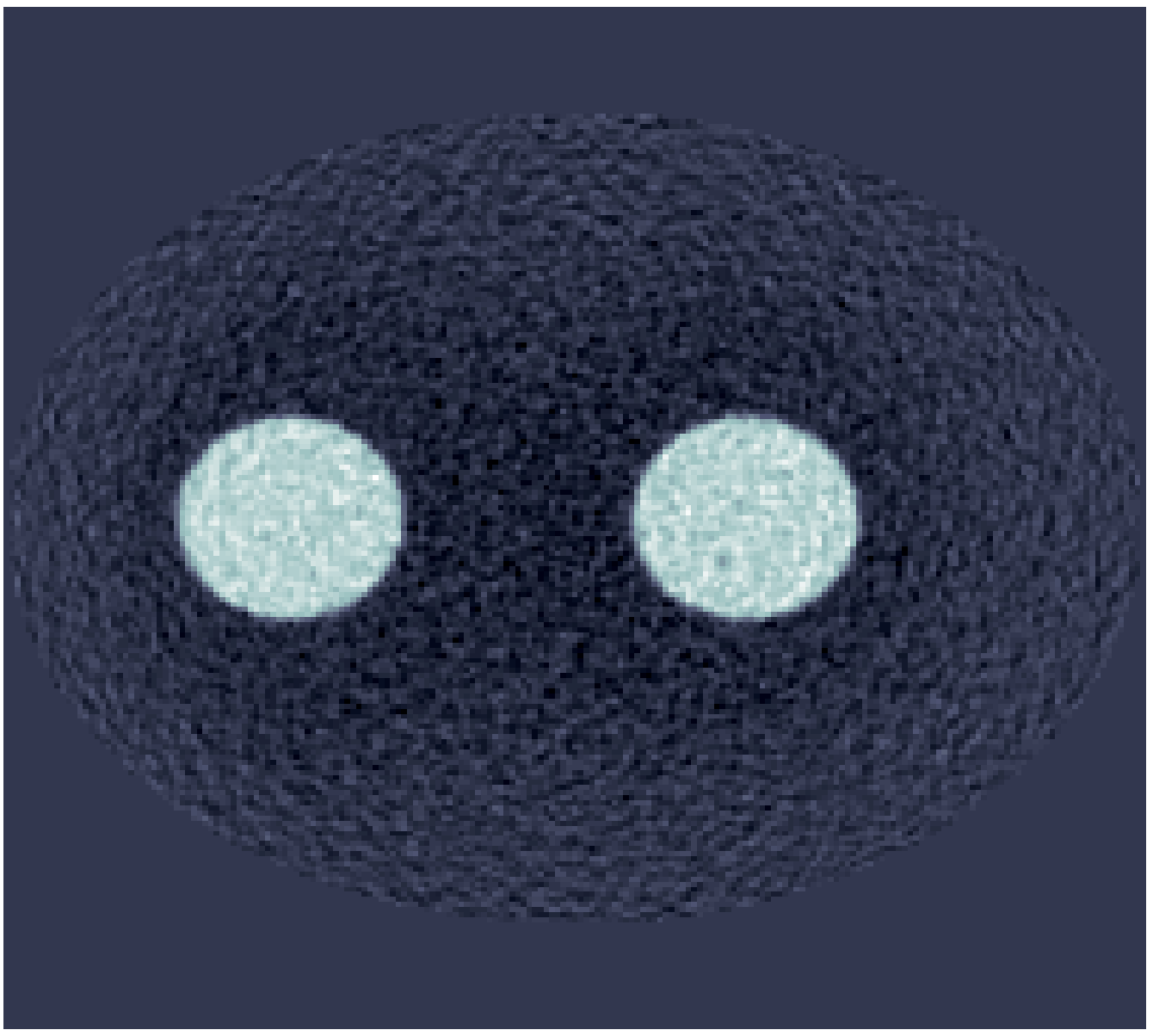}\;
 \includegraphics[width=0.24\textwidth]{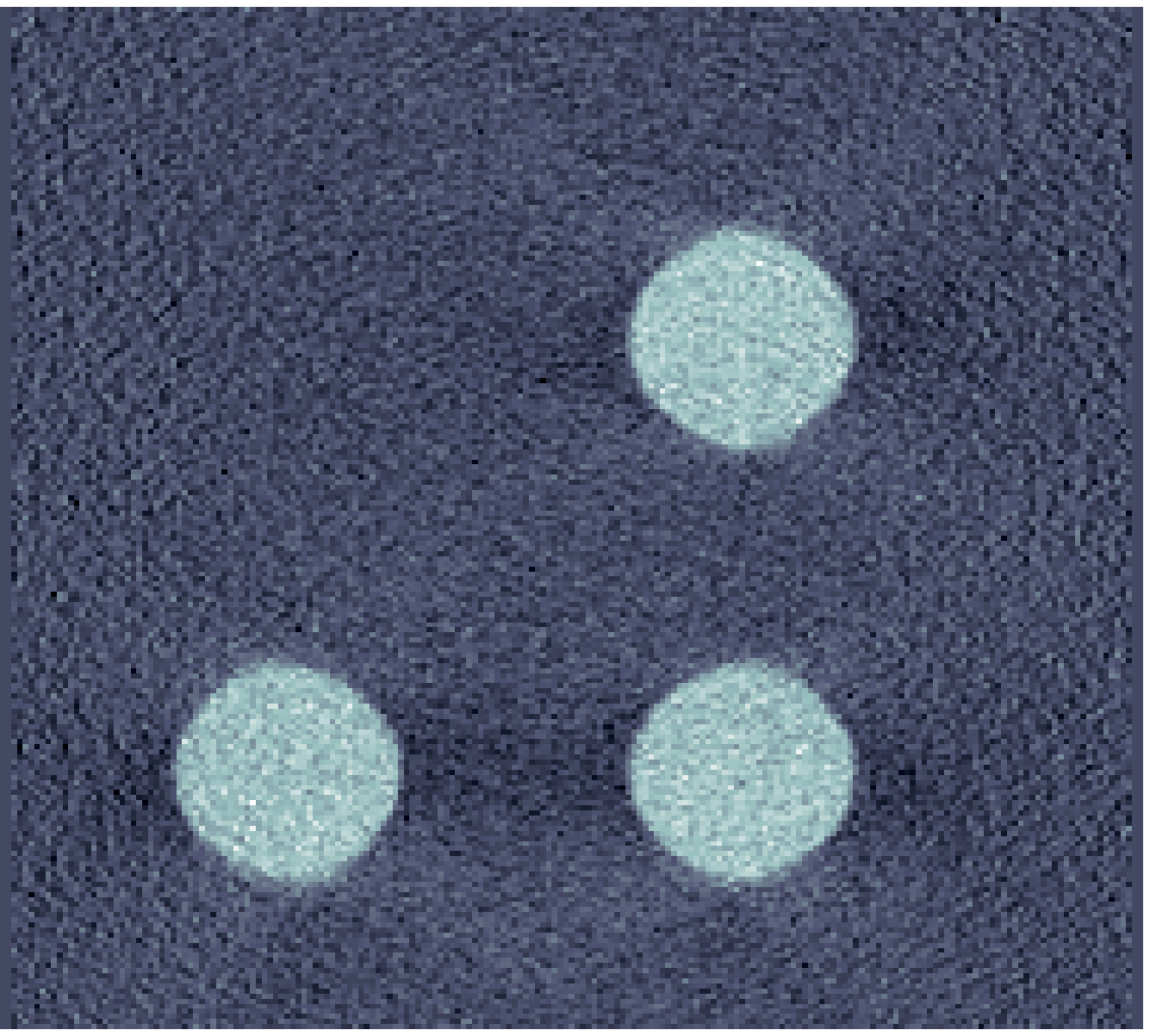}\;
 \includegraphics[width=0.24\textwidth]{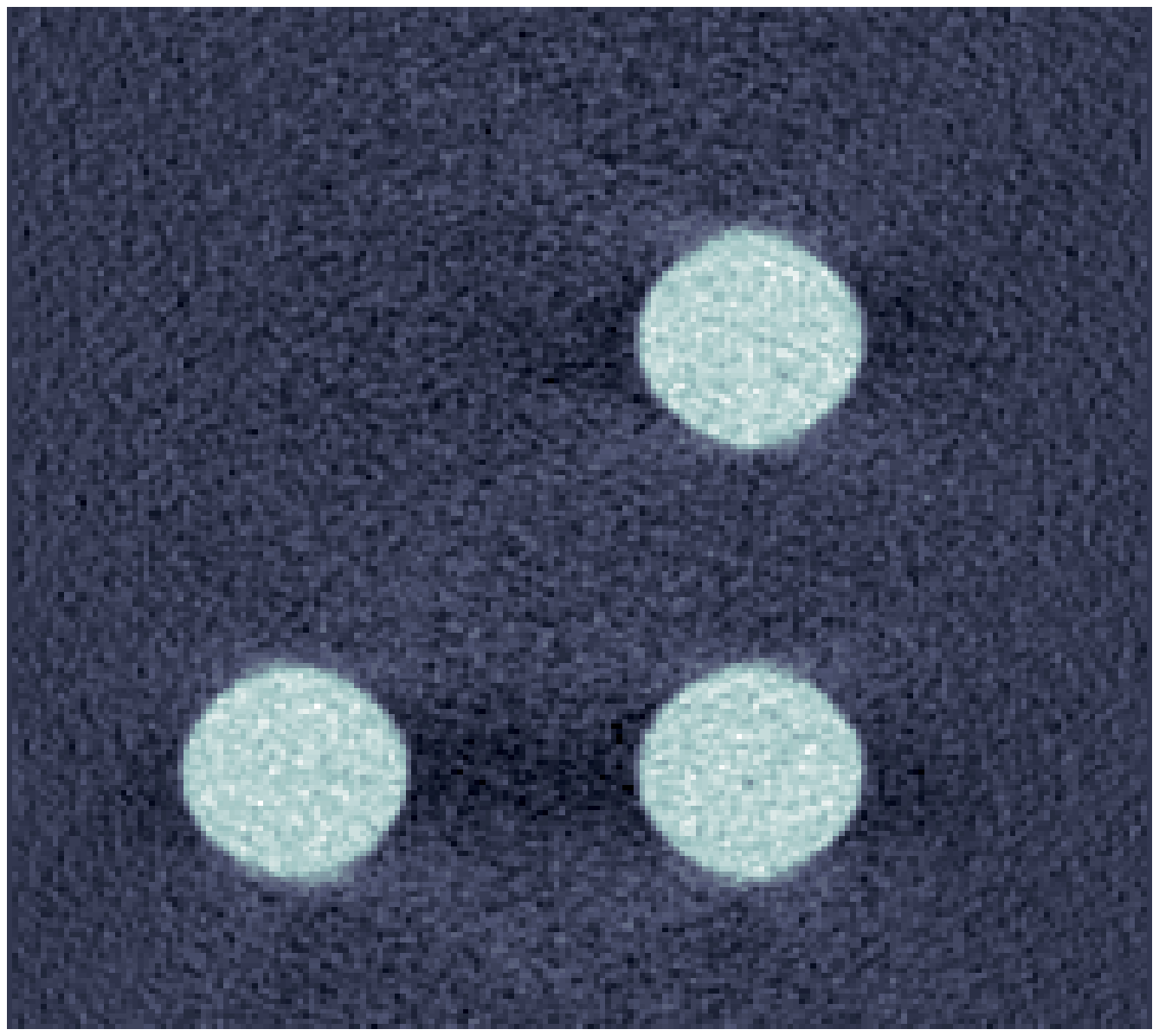}
 \end{center}
\caption{\label{fig:noisy}\textsc{Reconstruction results using noisy data.}
From left to right: Horizontal slice   using  \eqref{eq:inv3d},  horizontal slice  using \eqref{eq:ubp3d}, vertical slice using \eqref{eq:inv3d}, and vertical slice using \eqref{eq:ubp3d}.}
\end{figure}

In order to illustrate the stability of the derived discrete
back-projection algorithms with respect to the noise we applied
all algorithms  to simulated data, where we added Gaussian white noise
with variance equal to $2\%$ of the maximal absolute value of $\gt$.
The reconstruction results (using $a_1=1$, $a_2=0.8$, $r_0 = 4$, $H=2$, $\Ksig = 256$, $\Kz=200$,  $\Kr=400$, and  $\Nt_x=100$ as above)
for noisy data are shown in Figure~\ref{fig:noisy}.
As can be seen  the implementations of all back-projection formulas perform quite stably with respect to noise. The slight amplification of noise is expected due to the ill-posedness of the inversion of the  spherical Radon transform reflected by the two derivatives in any of the inversion formulas. The sensitivity with respect to noise could easily be further reduced by applying  a regularization strategy similar to  \cite{HalSchuSch05,haltmeier11srt} for the  spherical Radon transform with centers on a sphere.

\section{Conclusion}
\label{sec:discussion}

In this paper we  studied an inversion of the spherical Radon transform in the case where the centers of the spheres of integration are located on a cylindrical
surface  $\sd  \times \RR^{\mm}$.
We showed that this particular instance of the spherical Radon  transform can be decomposed into two lower dimensional partial spherical Radon transforms, one with a center-set $\sd \subseteq \RR^{\nn}$ and one with a planar center-set in
$\RR^{\mm+1}$, see Theorem \ref{thm:factorization}.
This factorization was used to derive explicit inversion formulas for elliptical  and circular cylinders. We emphasize, that our factorization approach can also be used for more general  cylinders  in combination with existing algorithms for an inversion from spherical means on bounded domains, such as time reversal or series  expansion.

\appendix

\section{Proof of explicit inversion formulas}

\subsection{Proof of Theorem~\ref{thm:inv-ell}}
\label{app:thm1}

In this appendix we  derive the inversion formula \eqref{eq:inv-ell-cyl}
stated in  Theorem~\ref{thm:inv-ell}. For that purpose we first give an
inversion formula  for spherical means on an ellipsoid that essentially
follows from~\cite{Hal14b,Sal14}.

\begin{lem}\label{lem:inv-ell}
For $f \in C_c^\infty ( E_A \times \RR^{\mm})$
and every  $(\xx, \yy) \in E_A \times \RR^{\mm}$, we have
\begin{equation}\label{eq:inv-ell}
f\kl{\xx,\yy}
= \frac{2^{\nn-3}\det(A)}{\sabs{\sph^{\nn-2}} (\nn-2)!}
\kl{ \Delta_{A\xx} \Mo_\xx^\sharp \Co_s \Mo_\xx f}
\kl{\xx,\yy} \,,
\end{equation}
with
\begin{equation*} 
(\Co_s  \g)\kl{\xx,\yy, s}
\coloneqq
\begin{cases}
\frac{2}{\pi}
\int_{0}^\infty
s'   \g\kl{\xx,\yy,s'}
 \log \abs{s^2 - s'^2} \ds'
 & \text{$\nn =2, $}
 \\
(-1)^{(\nn-1)/2}
	\kl{ s  \D_s^{\nn-3} s^{\nn-2} \g }
	\kl{\xx,\yy, s}
	& \text{$\nn \geq 3$ odd} \,, \\
(-1)^{(\nn-2)/2}
\kl{ \Ho_s   \D_s^{\nn-3} s^{\nn-2}\g  }\kl{\xx,\yy, s}
 & \text{$\nn \geq 4 $ even} \,.
\end{cases}
\end{equation*}
\end{lem}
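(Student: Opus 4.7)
The strategy is to reduce \eqref{eq:inv-ell} to the already-known inversion formula for the $\nn$-dimensional spherical Radon transform with centers on the ellipsoid $\partial E_A$, derived in \cite{Hal14b,Sal14}. The key observation is that all three operators on the right-hand side of \eqref{eq:inv-ell}, namely $\Delta_{A\xx}$, $\Mo_\xx^\sharp$, and $\Co_s$, act only in the $\xx$- and $s$-variables while $\yy \in \RR^\mm$ is left as an inert parameter: $\Co_s$ filters in $s$ alone; $\Mo_\xx^\sharp$ integrates over $\sigma \in \sph^{\nn-1}$ and evaluates its argument at $(A\sigma, \yy, \sabs{\xx - A\sigma})$ with $\yy$ frozen; and $\Delta_{A\xx}$ is a second-order differential operator in $\xx$. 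Moreover, by definition $(\Mo_\xx f)(\xx, \yy, s)$ is precisely the classical spherical mean in $\RR^\nn$ of the slice $\xx \mapsto f(\xx, \yy)$ over the sphere of center $\xx$ and radius $s$.

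Fixing $\yy_0 \in \RR^\mm$ and setting $f_{\yy_0}(\xx) \coloneqq f(\xx, \yy_0) \in C_c^\infty(E_A)$, these observations imply that \eqref{eq:inv-ell} evaluated at $(\xx, \yy_0)$ is equivalent to the purely $\nn$-dimensional identity
\begin{equation*}
f_{\yy_0}(\xx) = \frac{2^{\nn-3}\det(A)}{\sabs{\sph^{\nn-2}}(\nn-2)!}\,\kl{\Delta_{A\xx} \Mo_\xx^\sharp \Co_s \Mo_\xx f_{\yy_0}}(\xx), \qquad \xx \in E_A,
\end{equation*}
where $\Mo_\xx$ and $\Mo_\xx^\sharp$ are now interpreted as operators on functions on $\RR^\nn$ alone (with the $\yy$-slot suppressed). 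So it suffices to establish this $\yy$-free identity for every $g \in C_c^\infty(E_A)$.

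Next I would invoke the ellipsoidal inversion formulas of \cite{Hal14b} and \cite{Sal14}, which yield precisely this identity. In \cite{Hal14b} the formula is obtained via the affine change of variables $\xx' = A\sigma$ that reduces the ellipsoidal center-set to the unit sphere (producing the factor $\det(A)$), combined with the known inversion of spherical means on spheres; in \cite{Sal14} the same formula is derived via a spherical-harmonic decomposition. Either derivation produces the three parity-dependent forms of the radial filter $\Co_s$ appearing in the lemma. Applying the cited inversion to $g = f_{\yy_0}$ and letting $\yy_0$ range over $\RR^\mm$ then delivers~\eqref{eq:inv-ell}.

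The main technical obstacle is reconciling the conventions between the two source papers, since the filters there are stated in slightly different formulations: Hilbert transforms applied to even extensions in $s$ versus log-kernel integral operators in the case $\nn = 2$, and iterated $\D_s = (2s)^{-1}\partial_s$ versus other equivalent radial derivatives. A short verification that the three cases correspond to the operator $\Co_s$ as written here, together with a careful tracking of the normalization constant to confirm the factor $2^{\nn-3}/[\sabs{\sph^{\nn-2}}(\nn-2)!]$, completes the argument.
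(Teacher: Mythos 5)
Your proposal follows the paper's proof exactly: treat $\yy$ as an inert parameter and reduce to the $\nn$-dimensional ellipsoidal inversion formulas of \cite{Sal14} (for $\nn=2,3$) and \cite{Hal14b} (for $\nn\geq 3$). The one caveat is that the ``short verification'' you defer is in fact the entire substance of the paper's proof: for even $\nn\geq 4$ the formula in \cite{Hal14b} is stated with a logarithmic kernel, and converting it to the Hilbert-transform form $\Ho_s\D_s^{\nn-3}s^{\nn-2}$ of $\Co_s$ requires an integration by parts absorbing one factor of $\D_s$, the partial-fraction identity $2s(s^2-a^2)^{-1}=(s-a)^{-1}+(s+a)^{-1}$, and the evenness of $\Mo_\xx f$ in $s$ to extend the principal-value integral to all of $\RR$ and recognize the Hilbert transform --- a computation worth writing out rather than waving at.
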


\begin{proof}
For $\nn = 2$ and $\nn = 3$, the inversion formula \eqref{eq:inv-ell} has  been derived in~\cite{Sal14} and for odd $\nn \geq 3$ in~\cite{Hal14b}.
For even $\nn\geq 4$,  in \cite{Hal14b} the following
inversion formula has been shown:
\begin{equation}\label{eq:invaux}
f\kl{\xx, \yy}
= \frac{2^{\nn-3}\det(A)}{\sabs{\sph^{\nn-2}} (\nn-2)!}
(-1)^{(\nn-2)/2}
  \Delta_{A\xx} \int_{S^{n-1}} \frac{2}{\pi} \int_{0}^\infty
\kl{\D_s^{\nn-2} s^{\nn-2}  \Mo_\xx f }\kl{A\sigma,\yy,s}
\log \abs{|\xx - A\sigma|^2 - s^2}
s \ds \dS(\sigma)  \,.
\end{equation}
Using $2s(s^2-|A\sigma-\xx|^2)^{-1}= (s-|A\sigma-\xx|)^{-1}+(s+|A\sigma-\xx|)^{-1}$, the inner integral in   \eqref{eq:invaux}
can be written as
\begin{multline*}
\frac{2}{\pi}
\int_{0}^\infty
\kl{\D_s^{\nn-2} s^{\nn-2}  \Mo_\xx f }\kl{A\sigma,\yy,s}
 \log \abs{|\xx - A\sigma|^2 - s^2} s\ds
 \\
\begin{aligned}
&=\frac{1}{\pi}\, \mathrm{P.V.}\!
 \int_{0}^\infty
\kl{\D_s^{\nn-3} s^{\nn-2}  \Mo_\xx f }\kl{A\sigma,\yy,s}
 \frac{2 s \ds}{s^2 - |\xx - A\sigma|^2}
 \\
&= \frac{1}{\pi}\left[
\, \mathrm{P.V.}\! \int_{0}^\infty
\frac{\kl{\D_s^{\nn-3} s^{\nn-2}  \Mo_\xx f }\kl{A\sigma,\yy,s}}{s - |\xx - A\sigma|} \ds
+
 \int_{0}^\infty
\frac{\kl{\D_s^{\nn-3} s^{\nn-2}  \Mo_\xx f }\kl{A\sigma,\yy,s}}{s + |\xx - A\sigma|} \ds\right]
\\
&= \frac{1}{\pi}
\, \mathrm{P.V.}\! \int_\RR
\frac{\kl{\D_s^{\nn-3} s^{\nn-2} \Mo_\xx f }\kl{A\sigma,\yy,s}}{s - |\xx - A\sigma|} \ds
\\
&=
\kl{ \Ho_s \D_s^{\nn-3} s^{\nn-2} \Mo_\xx f }
\kl{A\sigma,\yy,|\xx - A\sigma|}  \,.
\end{aligned}
\end{multline*}
Inserting the last displayed equation in \eqref{eq:invaux} shows \eqref{eq:inv-ell} for the case of even $\nn\geq 4$.
\end{proof}

\begin{lem}\label{lem:HDH}
For even and differentiable $h \colon \RR \to \RR$ we have 
$\Ho_s \D_s \Ho_s \g = - \D_s \g$.
\end{lem}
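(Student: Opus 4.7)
The plan is to reduce the identity to two standard properties of the Hilbert transform: it commutes with $\partial_s$ (being a Fourier multiplier in $s$), and it squares to $-\mathrm{Id}$. Since $\Ho_s\partial_s = \partial_s \Ho_s$,
\[
\Ho_s \D_s \Ho_s \g = \Ho_s \kl{\tfrac{1}{2s} \partial_s \Ho_s \g} = \tfrac{1}{2} \, \Ho_s \kl{\tfrac{1}{s} \Ho_s u},
\]
where $u \coloneqq \partial_s \g$ is odd in $s$ because $\g$ is even. The lemma therefore reduces to the claim $\Ho_s \bigl(\tfrac{1}{s} \Ho_s u\bigr) = -\tfrac{u}{s}$ whenever $u$ is odd.

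To prove this claim, set $w \coloneqq \Ho_s u$, which is even because the Hilbert transform reverses parity, and use $\Ho_s^2 = -\mathrm{Id}$ to rewrite the target identity as the commutation $\Ho_s(w/s) = (\Ho_s w)/s$ for even $w$. I would establish this commutation by inserting the partial-fraction identity
\[
\frac{1}{s'(s-s')} = \frac{1}{s}\kl{\tfrac{1}{s'} + \tfrac{1}{s-s'}}
\]
into the principal-value definition of $\Ho_s(w/s)(s)$. The contribution of $1/s'$ is a multiple of $\mathrm{P.V.}\!\int_{\RR} \tfrac{w(s')}{s'}\,\mathrm{d}s'$, which vanishes by oddness of the integrand (since $w$ is even), while the contribution of $1/(s-s')$ reproduces $(\Ho_s w)(s)/s$. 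Substituting $\Ho_s w = \Ho_s^2 u = -u$ finishes the argument.

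The main technical obstacle is that when $w(0) \neq 0$ the function $w/s$ has a genuine singularity at the origin, so the principal-value manipulations must be justified distributionally. This is handled by splitting $w = w(0) + (w-w(0))$: the remainder $w - w(0)$ is even and vanishes to second order at the origin (Taylor), so $(w-w(0))/s$ is smooth and the computation above is classical, while the constant part $w(0)/s$ is dealt with via the distributional identity $\Ho_s[\mathrm{P.V.}(1/s)] = -\pi\delta$ (up to normalization), whose contribution is consistent with the partial-fraction algebra. Once this bookkeeping is in place, the proof is a two-line consequence of $[\Ho_s,\partial_s]=0$ and $\Ho_s^2 = -\mathrm{Id}$.
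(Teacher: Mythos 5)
Your argument is correct, and it reaches the identity by a route that is structured differently from the paper's, although both ultimately rest on the same two ingredients (parity and $\Ho_s\Ho_s=-\mathrm{Id}$). The paper never commutes $\partial_s$ with $\Ho_s$: it first rewrites $\Ho_s$ on odd functions as the half-line integral $\frac1\pi\mathrm{P.V.}\!\int_0^\infty \frac{2s'}{s^2-(s')^2}\varphi(s')\ds'$ (its identity \eqref{eq:HB}), applies this with $\varphi=\D_s\Ho_s h$, and then pulls $\D_s$ (a derivative in $s^2$) outside the integral using the fact that the kernel depends only on $s^2-(s')^2$ (an integration by parts in $(s')^2$ whose boundary term vanishes because $\Ho_s h$ is odd); the conclusion is then $\D_s\Ho_s\Ho_s h=-\D_s h$. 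You instead split $\D_s=\tfrac1{2s}\partial_s$, use $[\Ho_s,\partial_s]=0$ to reduce everything to the commutation $\Ho_s(w/s)=(\Ho_s w)/s$ for even $w$, and prove that by partial fractions plus parity. These key steps are two faces of the same computation --- your partial-fraction identity is exactly what produces the even kernel $2s'/(s^2-(s')^2)$ in \eqref{eq:HB} --- but your version isolates a clean reusable statement ($\Ho_s$ commutes with division by $s$ on even functions) and makes explicit the derivative/principal-value interchange that the paper performs silently. One small correction to your last paragraph: the $\delta$-term arising from $\Ho_s[\mathrm{P.V.}(1/s)]=-\pi\delta$ is not ``consistent with'' the pointwise partial-fraction computation; it is simply supported at $s=0$, where neither side of the lemma is being asserted (indeed $\D_s h$ itself is defined there only by continuity, and the lemma is applied at $s=|\xx-A\sigma|>0$). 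So the correct resolution is that the identity is a pointwise one for $s\neq 0$, in which case the symmetric principal value at the origin already kills the $1/s'$ contribution and no distributional bookkeeping is needed; this is also the (implicit) level of rigor of the paper's own proof.
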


\begin{proof}
Since $\g$ is even,  $\Ho_s h$ and
$\D_s \Ho_s h$ are odd  in $s$. Now, for any
odd function  $\varphi \colon \RR \to \RR$, we get
\begin{multline} \label{eq:HB}
(\Ho_s \varphi)  (s)
= \frac{1}{\pi} \, \mathrm{P.V.}\!\int_{\RR} \frac{\varphi(s')}{s - s'} \ds'
= \frac{1}{\pi} \kl{ \mathrm{P.V.}\!\int_{0}^\infty 
  \frac{\varphi(s')}{s - s'} \ds'
 -\mathrm{P.V.}\!\int_0^\infty 
  \frac{\varphi(s')}{s + s'} \ds'}
\\= \frac{1}{\pi} \, \mathrm{P.V.}\!\int_{0}^\infty 
  \frac{ 2 s'}{s^2 - (s')^2 } \varphi(s')   \ds' \,.
\end{multline}
Applying \eqref{eq:HB} once with $\varphi = \D_s \Ho_s h$ and once with  $\varphi = \Ho_s h$  yields       
\begin{align*}
(\Ho_s \D_s \Ho_s \g)  (s)
&= \frac{1}{\pi} \, \mathrm{P.V.}\!\int_{0}^\infty 
  \frac{ 2 s'}{s^2 - (s')^2 } (\D_s \Ho_s \g)(s')   \ds'
 \\
&=  \D_s  \, \frac{1}{\pi} \, \mathrm{P.V.}\!\int_{0}^\infty 
   \frac{2 s'}{s^2 - (s')^2} ( \Ho_s \g)(s')  \ds'
 =  (\D_s   \Ho_s \Ho_s \g)(s) = - \D_s\g (s) \,.\qedhere
\end{align*}
\end{proof}

Now we are ready to prove Theorem~\ref{thm:inv-ell}. For that purpose, let $f \in C_c^\infty(E_A \times \RR^\mm)$. From Theorem \ref{thm:factorization} and the inversion formula \cite{andersson88} for the spherical  Radon transform with  a planar center-set  (see also \cite{klein03,fawcett85}), we obtain  %states that
 \begin{equation} \label{eq:reduce}
\Mo_\xx f
=
\frac{\sabs{\sph^{\ddim-1}}}{(2\pi)^\mm\sabs{\sph^{\nn-1}}}
\Bigl(   \abs{s}^{1-\nn} (- \Delta_{\yy,s})^{(\mm-1)/2}
\Ho_s   \partial_s \Mo_{\yy,s}^\sharp  \abs{r}^{\nn-1} \Mo_{\xx,\yy} f \Bigr) \,.
\end{equation}
Together with   \eqref{eq:inv-ell} this gives 
 \begin{align*}
  f(\xx,\yy)
&=
\frac{2^{\nn-3}\det(A)}{\sabs{\sph^{\nn-2}} (\nn-2)!}
\kl{ \Delta_{A\xx} \Mo_\xx^\sharp \Co_s \Mo_\xx f}
(\xx,\yy)\\
&=
\frac{2^{\nn-3}\det(A)}{\sabs{\sph^{\nn-2}} (\nn-2)!}
\frac{\sabs{\sph^{\ddim-1}}}{(2\pi)^\mm\sabs{\sph^{\nn-1}}}
\kl{ \Delta_{A\xx} \Mo_\xx^\sharp \Co_s \abs{s}^{1-\nn} (- \Delta_{\yy,s})^{(\mm-1)/2}
\Ho_s   \partial_s \Mo_{\yy,s}^\sharp  \abs{r}^{\nn-1} \Mo_{\xx,\yy} f}(\xx,\yy)
\\
&=
\frac{2^{\nn-2}\det(A)}{(2\pi)^\mm(\nn-2)!}
\frac{\sabs{\sph^{\ddim-1}}}{\sabs{\sph^{\nn-2}}\sabs{\sph^{\nn-1}}}
\kl{ \Delta_{A\xx} \Mo_\xx^\sharp \Bo_s  (- \Delta_{\yy,s})^{(\mm-1)/2}
 \Mo_{\yy,s}^\sharp  \abs{r}^{\nn-1} \Mo_{\xx,\yy} f}(\xx,\yy)\,,
\end{align*}
with 
\begin{equation*} 
(\Bo_s  \g)\kl{\xx,\yy, s}
=
\begin{cases}
\frac{1}{\pi}
\int_{0}^\infty
 (\partial_s \Ho_s\g)\kl{\xx,\yy,s'}
 \log \abs{s^2 - s'^2} \ds'
 & \text{if $\nn =2 $,}
 \\
\frac{1}{2} (-1)^{(\nn-1)/2}
	\kl{ s  \D_s^{\nn-3} s^{-1} \partial_s \Ho_s\g }
	\kl{\xx,\yy, s}
	& \text{if $\nn \geq 3$ odd,} \\
\frac{1}{2} (-1)^{(\nn-2)/2}
\kl{ \Ho_s   \D_s^{\nn-3} s^{-1}\partial_s\Ho_s\g }\kl{\xx,\yy, s}
 & \text{if $\nn \geq 4 $ even} \,.
\end{cases}
\end{equation*}
It remains to verify, that $\Bo$ can be written in the form
\eqref{eq:bo}. For $\nn \geq 3$ odd this follows from    $\D_s = 1/2 s^{-1}\partial_s$. 
In the case $\nn=2$  integration by parts and applying \eqref{eq:HB} shows 
\begin{align*}
(\Bo_s  \g)\kl{\xx,\yy, s} 
&= 
-\frac{1}{\pi} \, \mathrm{P.V.}\!\int_{0}^\infty (\Ho_s\g) \kl{\xx,\yy,s'}
 \frac{2s'}{s^2 - s'^2} \ds'
= 
  - (\Ho_s\Ho_s\g) \kl{\xx,\yy,s} =  \g \kl{\xx,\yy,s}  \,.
\end{align*}
Finally, for the case that  $\nn \geq 4$, repeated application of Lemma~\ref{lem:HDH} implies that $\Bo$ is given by 
\eqref{eq:bo}, and concludes the proof.

\subsection{Proof of Theorem~\ref{thm:inv-disc}}
\label{app:thm2}

To prove Theorem~\ref{thm:inv-disc} we use the  following modification of one of the formulas  of  \cite[Corollary 1.2]{finchhr07}.

\begin{lem}\label{lem:fhr}
For $f \in C_c^\infty (D_R \times \RR^\mm)$  and
$(\xx, \yy) \in D_R \times \RR^\mm$, we have
\begin{equation}\label{eq:fhr}
f(\xx, \yy)
	=\frac{1}{2}
	\kl{ \Mo_\xx^\sharp \Ho_s\partial_s \abs{s} \Mo_\xx  f}
	(\xx, \yy) \,.
\end{equation}
\end{lem}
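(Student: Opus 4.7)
The plan is to reduce the statement to a purely two-dimensional inversion formula and then invoke the Finch--Haltmeier--Rakesh inversion of the circular Radon transform with centers on $\partial D_R$, which is what the paper points to.

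First, I would observe that all operators appearing on the right-hand side of \eqref{eq:fhr}, namely $\Mo_\xx$, multiplication by $\abs{s}$, $\partial_s$, $\Ho_s$, and $\Mo_\xx^\sharp$, act exclusively in the variables $\xx \in D_R$ and $s \in \RR$, while $\yy \in \RR^\mm$ enters only as a parameter. Fixing $\yy$ and setting $g := f(\cdot,\yy) \in C_c^\infty(D_R)$, the value $(\Mo_\xx f)(\xx,\yy,s)$ is precisely the circular mean of $g$ with center $\xx$ and radius $\abs{s}$, and $(\Mo_\xx^\sharp \g)(\xx,\yy)$ reduces to a backprojection over $\partial D_R$ against $g$-data. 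Hence it suffices to prove the planar identity $g(\xx) = \tfrac{1}{2}\,(\Mo_\xx^\sharp \Ho_s \partial_s \abs{s} \Mo_\xx g)(\xx)$ for $g \in C_c^\infty(D_R)$ and $\xx \in D_R$.

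Second, I would invoke Corollary 1.2 of \cite{finchhr07}, which in two dimensions gives a universal backprojection formula of the general type
\begin{equation*}
g(\xx) = \frac{1}{2\pi R} \int_{\partial D_R} \mathrm{P.V.}\!\int_0^{2R} \frac{\partial_t\bigl(t\,(\Mo_\xx g)(\xx',t)\bigr)}{t^2 - \abs{\xx - \xx'}^2}\, t \, dt \, dS(\xx').
\end{equation*}
To convert this into the stated form, I would use that $(\Mo_\xx g)(\xx',s)$ extends to an even function of $s \in \RR$, so $\abs{s}(\Mo_\xx g)(\xx',s)$ is even and $\partial_s\bigl(\abs{s}(\Mo_\xx g)(\xx',s)\bigr)$ is odd in $s$, while on $\{s>0\}$ it coincides with $\partial_t(t \Mo_\xx g)(\xx',t)$. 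Applying identity \eqref{eq:HB} (already established in the proof of Lemma \ref{lem:HDH}) to this odd function converts the inner principal-value integral into a multiple of $(\Ho_s \partial_s \abs{s}\Mo_\xx g)(\xx', \abs{\xx-\xx'})$.

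Third, I would rewrite the outer integral $\int_{\partial D_R} \cdots \dS(\xx')$ as $R \int_{\sph^1} \cdots \dS(\sigma)$ via the substitution $\xx' = R\sigma$, which matches exactly the definition of $\Mo_\xx^\sharp$ in the case $\sd = \partial D_R$ (i.e.\ $A = R\,I$). Collecting the constants from these two steps against the $1/(2\pi R)$ prefactor should yield precisely $\tfrac{1}{2}$ and complete the proof. The main obstacle is the bookkeeping: matching signs and normalizations between the raw form in \cite{finchhr07}, the Hilbert-transform rewriting via \eqref{eq:HB}, and the $\xx' = R\sigma$ parametrization, so that the constant collapses to $\tfrac{1}{2}$. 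Everything else is a straightforward transcription once the planar reduction is in place.
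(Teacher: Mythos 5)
Your first step (freezing $\yy$ and reducing to a planar identity) is fine, since every operator in \eqref{eq:fhr} acts only in $(\xx,s)$. The problem is your second step: the formula you attribute to \cite[Corollary~1.2]{finchhr07} is not what that paper proves, and the discrepancy is precisely the content of Lemma~\ref{lem:fhr}. The Finch--Haltmeier--Rakesh formulas in two dimensions have the radial derivative acting \emph{directly} on the circular means, i.e.\ they involve $r\,\partial_r(\Mo_\xx f)$ (equivalently $\partial_r\kl{r\,\partial_r \Mo_\xx f}$ paired with a logarithmic kernel); in the operator notation of this paper they give
\begin{equation*}
f = \tfrac{1}{2}\,\Mo_\xx^\sharp \Ho_s \abs{s}\,\partial_s \Mo_\xx f ,
\end{equation*}
with multiplication by $\abs{s}$ \emph{outside} the derivative. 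Your quoted formula, with numerator $\partial_t\kl{t\,(\Mo_\xx g)(\xx',t)}$, has the opposite ordering $\partial_s\abs{s}$ --- which is exactly the identity the lemma asserts; this is why the paper calls the lemma a ``modification'' of \cite{finchhr07} rather than a restatement. So your plan is circular: the ``straightforward transcription'' starts from the statement to be proven. (As a side check, with your constant $1/(2\pi R)$ and sign, your quoted identity is in fact false: combining it with \eqref{eq:HB} and the true Lemma~\ref{lem:fhr} forces $g = -\tfrac{1}{2}g$.)

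The missing idea is how to pass between the two orderings. By the product rule,
\begin{equation*}
\Mo_\xx^\sharp \Ho_s \partial_s \abs{s} \Mo_\xx f
= \Mo_\xx^\sharp \Ho_s \sign(s)\, \Mo_\xx f
+ \Mo_\xx^\sharp \Ho_s \abs{s}\, \partial_s \Mo_\xx f ,
\end{equation*}
and the entire point is that the first term vanishes. This is not bookkeeping: for a generic even function $\g(\xx',\yy,s)$ in place of $\Mo_\xx f$, the quantity $\Mo_\xx^\sharp \Ho_s \sign(s)\,\g$ does \emph{not} vanish; it vanishes here only because $\Mo_\xx f$ lies in the range of the circular mean transform. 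The paper gets this from the range condition of \cite{finchr09},
\begin{equation*}
\int_{\partial D_R} \mathrm{P.V.}\!\int_0^\infty \frac{2s\,(\Mo_\xx f)(\xx',\yy,s)}{s^2-\sabs{\xx'-\xx}^2}\,\ds\dS(\xx') = 0 ,
\end{equation*}
which, after the same even/odd manipulation via \eqref{eq:HB} that you invoke, is exactly $\kl{\Mo_\xx^\sharp \Ho_s \sign(s) \Mo_\xx f}(\xx,\yy)=0$; then \cite[Eq.~(1.6)]{finchhr07} identifies the second term with $2f(\xx,\yy)$. Without this range-condition ingredient (or a substitute for it), your argument can only reproduce the original Finch--Haltmeier--Rakesh formula, not \eqref{eq:fhr}.
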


\begin{proof}
The proof is based on \cite{finchhr07}  and a range condition for $\Mo_\xx$ derived in \cite{finchr09}.
The range condition for the  spherical Radon transform of~\cite{finchr09} yields
\begin{align} \nonumber
0
&=
\int_{\partial D_R}\mathrm{P.V.}\!\int_0^\infty\frac{2s}{s^2-|\xx'-\xx|^2}\Mo_\xx f(\xx', \yy,s)
\ds \dS(\xx')
\\ \nonumber
&=\int_{\partial D_R}\mathrm{P.V.}\!\int_0^\infty\Mo_\xx f(\xx', \yy,s)\frac{\ds \dS(\xx')}{s-|\xx'-\xx|}+\int_{\partial D_R} \int_0^\infty\Mo_\xx f(\xx', \yy,s)\frac{\ds \dS(\xx')}{s+|\xx'-\xx|}
\\ \nonumber
&=\int_{\partial D_R}\mathrm{P.V.}\!\int_\RR 
\sign(s) \Mo_\xx f (\xx',\yy, s)\frac{\ds \dS(\xx')}{s-|\xx'-\xx|}
\\ \label{eq:identityn}
& = \kl{ \Mo_\xx^\sharp \Ho_s \sign(s) \Mo_\xx  f}
	(\xx, \yy)
	\,.
\end{align}
By the product rule, we have 
\begin{equation*}
\kl{ \Mo_\xx^\sharp \Ho_s\partial_s \abs{s} \Mo_\xx  f}
(\xx, \yy)
=
\kl{ \Mo_\xx^\sharp \Ho_s \sign(s) \Mo_\xx  f}
	(\xx, \yy) 
+
	\kl{ \Mo_\xx^\sharp \Ho_s \abs{s} \partial_s \Mo_\xx  f}
	(\xx, \yy) \,.
\end{equation*}
According to~\eqref{eq:identityn}, the first term in the last displayed equation equals zero, and according to  \cite[Eq.~(1.6)]{finchhr07}
the second term is equal to $2f(\xx,\yy)$.
This completes the proof. 
\end{proof}

The inversion formula \eqref{eq:inv-sc} now follows from~\eqref{eq:reduce} and  \eqref{eq:fhr}.
In fact,  
\begin{align*}
f (\xx, \yy)
&=\frac{1}{2}
	\kl{ \Mo_\xx^\sharp \Ho_s\partial_s \abs{s} \Mo_\xx  f}
	(\xx, \yy)
\\
&=\frac{\sabs{\sph^{\mm+1}}}{2 (2\pi)^{\mm+1} }
\kl{ \Mo_\xx^\sharp
\mathbf H_{s}\partial_s (-\Delta_{\yy,s})^{(\mm-1)/2}
\mathbf H_{s}  \partial_s \Mo_{\yy,s}^\sharp \abs{r} \Mo_{\xx,\yy} f} (\xx, \yy)
\\&=
-\frac{\sabs{\sph^{\mm+1}}}{2 (2\pi)^{\mm+1}}
\kl{\Mo_\xx^\sharp
(-\Delta_{\yy,s})^{(\mm-1)/2} \partial_s^2 \Mo_{\yy,s}^\sharp \abs{r} \Mo_{\xx,\yy} f } (\xx, \yy) \,,
\end{align*}
where we used two identities     
$\mathbf H_{s}\partial_s (-\Delta_{\yy,s})^{(m-1)/2} = (-\Delta_{\yy,s})^{(m-1)/2} \partial_s \mathbf H_{s}$ and
$\Ho_s \Ho_s  \g = - \g$.

\section*{Acknowledgements}
The work of the second author was supported in part by the National Research Foundation of Korea (NRF) grant funded by the Korea government (MSIP) (No.2015R1C1A1A01051674) and supported by the TJ Park
Science Fellowship of POSCO TJ Park Foundation.

\biboptions{sort&compress}
\section*{Bibliography}
\bibliographystyle{elsarticle-num-names}

\end{document}